\def\vint_#1{\mathchoice%
          {\mathop{\kern 0.2em\vrule width 0.6em height 0.69678ex depth -0.58065ex
                  \kern -0.8em \intop}\nolimits_{\kern -0.4em#1}}%
          {\mathop{\kern 0.1em\vrule width 0.5em height 0.69678ex depth -0.60387ex
                  \kern -0.6em \intop}\nolimits_{#1}}%
          {\mathop{\kern 0.1em\vrule width 0.5em height 0.69678ex
              depth -0.60387ex
                  \kern -0.6em \intop}\nolimits_{#1}}%
          {\mathop{\kern 0.1em\vrule width 0.5em height 0.69678ex depth -0.60387ex
                  \kern -0.6em \intop}\nolimits_{#1}}}
\def\vintslides_#1{\mathchoice%
          {\mathop{\kern 0.1em\vrule width 0.5em height 0.697ex depth -0.581ex
                  \kern -0.6em \intop}\nolimits_{\kern -0.4em#1}}%
          {\mathop{\kern 0.1em\vrule width 0.3em height 0.697ex depth -0.604ex
                  \kern -0.4em \intop}\nolimits_{#1}}%
          {\mathop{\kern 0.1em\vrule width 0.3em height 0.697ex depth -0.604ex
                  \kern -0.4em \intop}\nolimits_{#1}}%
          {\mathop{\kern 0.1em\vrule width 0.3em height 0.697ex depth -0.604ex
                  \kern -0.4em \intop}\nolimits_{#1}}}
\newcommand{\kint}{\vint}
\newcommand{\aveint}[2]{\mathchoice%
          {\mathop{\kern 0.2em\vrule width 0.6em height 0.69678ex depth -0.58065ex
                  \kern -0.8em \intop}\nolimits_{\kern -0.45em#1}^{#2}}%
          {\mathop{\kern 0.1em\vrule width 0.5em height 0.69678ex depth -0.60387ex
                  \kern -0.6em \intop}\nolimits_{#1}^{#2}}%
          {\mathop{\kern 0.1em\vrule width 0.5em height 0.69678ex depth -0.60387ex
                  \kern -0.6em \intop}\nolimits_{#1}^{#2}}%
          {\mathop{\kern 0.1em\vrule width 0.5em height 0.69678ex depth -0.60387ex
                  \kern -0.6em \intop}\nolimits_{#1}^{#2}}}
\def\dashint{\,\ThisStyle{\ensurestackMath{%
  \stackinset{c}{.2\LMpt}{c}{.5\LMpt}{\SavedStyle-}{\SavedStyle\phantom{\int}}}%
  \setbox0=\hbox{$\SavedStyle\int\,$}\kern-\wd0}\int}
\def\ddashint{\,\ThisStyle{\ensurestackMath{%
  \stackinset{c}{.2\LMpt}{c}{.5\LMpt+.2\LMex}{\SavedStyle-}{%
    \stackinset{c}{.2\LMpt}{c}{.5\LMpt-.2\LMex}{\SavedStyle-}{%
      \SavedStyle\phantom{\int}}}}\setbox0=\hbox{$\SavedStyle\int\,$}\kern-\wd0}\int}
\theoremstyle{plain}
\def \A{\mathcal A}
\DeclareMathOperator{\tr}{trace}
\def \eps{\varepsilon}
\def \R{\mathbb R}
\begin{document}


\theoremstyle{plain}
\newtheorem{theorem}{Theorem} [section]
\newtheorem{corollary}[theorem]{Corollary}
\newtheorem{lemma}[theorem]{Lemma}
\newtheorem{proposition}[theorem]{Proposition}
\newtheorem{example}[theorem]{Example}


\theoremstyle{definition}
\newtheorem{definition}[theorem]{Definition}
\theoremstyle{remark}
\newtheorem{remark}[theorem]{Remark}

\numberwithin{theorem}{section}
\numberwithin{equation}{section}

\title[Asymptotic Mean value formulas for parabolic nonlinear equations]{Asymptotic Mean value formulas for parabolic nonlinear equations}

\thanks{P.B. partially supported by the Academy of Finland project no. 298641. \\
\indent F.C. partially supported by MICINN grants MTM2017-84214-C2-1-P and PID2019-110712GB-I100 (Spain).
 \\
\indent J.D.R. partially supported by 
CONICET grant PIP GI No 11220150100036CO
(Argentina), PICT-2018-03183 (Argentina) and UBACyT grant 20020160100155BA (Argentina).
}

\author[P. Blanc]{Pablo Blanc}
\address{{\sc Pablo Blanc.} Department of Mathematics and Statistics, University of Jyv\"askyl\"a, PO Box 35, FI-40014 Jyv\"askyl\"a, Finland.}
\email{pblanc@dm.uba.ar}

\author[F. Charro]{Fernando Charro}
\address{{\sc Fernando Charro.} Department of Mathematics, Wayne State University, 656 W. Kirby, Detroit, MI 48202, USA.}
\email{fcharro@wayne.edu}
\thanks{}

\author[J. J. Manfredi]{Juan J. Manfredi}
\address{{\sc Juan J. Manfredi.}
Department of Mathematics,
University of Pittsburgh, Pittsburgh, PA 15260, USA.
}
\email{manfredi@pitt.edu}

\author[J. D. Rossi]{Julio D. Rossi}
\address{{\sc Julio D. Rossi.} Departamento  de Matem\'atica, FCEyN, Universidad de Buenos Aires,
 Pabell\'on I, Ciudad Universitaria (1428),
Buenos Aires, Argentina.}
\email{jrossi@dm.uba.ar}

\newcommand{\pablo}[1]{{\color{blue}{#1}}}

\keywords{Asymptotic Mean Value Formulas, viscosity solutions, parabolic nonlinear equations, parabolic Monge-Amp\`ere equations.
\\
\indent 2020 {\it Mathematics Subject Classification:}
35K96, 
35D40, 
35B05
}
\date{}

\begin{abstract} In this paper we characterize viscosity solutions to nonlinear parabolic
equations (including parabolic Monge-Amp\`ere equations) by 
asymptotic mean value formulas.
Our asymptotic mean value formulas can be interpreted from a probabilistic point of view   in terms of Dynamic Programming
Principles for certain two-player, zero-sum games.
\end{abstract}

\maketitle

\section{Introduction} \label{sec.intro}

\subsection{Asymptotic mean value formulas for elliptic equations}
It is a classical well-known fact that  a function $u$ is harmonic 
($u$ is a solution to $\Delta u= \mbox{trace}(D^2u)=0$) in a domain
$\Omega\subset\mathbb{R}^n$ 
 if and only if $u$ satisfies the mean value property
\begin{equation}\label{MVPrperty_HarmonicFunctions}
u(x)
=
\dashint_{B_\varepsilon(x)}u(y)\, dy
\end{equation}
for each $x\in\Omega$ and all $0 < \varepsilon< \textrm{dist}(x,\partial\Omega)$. 
In fact, a weaker statement, an asymptotic version of the mean value property, suffices to characterize  harmonic functions. A continuous function $u$ is harmonic in $\Omega$ if and only if 
\begin{equation}\label{Asymptotic.MVP.Harmonic}
u(x)=\dashint_{B_\varepsilon(x)} u(y)\,dy+o(\varepsilon^2)\quad\textrm{as}\ \varepsilon\to0,
\end{equation}
see \cite{Blaschke,Ku,Privaloff}.
Moreover, the mean value property can be used to characterize sub- and superharmonic functions replacing the equality by the appropriate inequality
in \eqref{MVPrperty_HarmonicFunctions} and \eqref{Asymptotic.MVP.Harmonic}.
A discrete version of the asymptotic mean value property also holds; a continuous function $u$ is harmonic if and only if  
\begin{equation}\label{Asymptotic.MVP.Harmonic.discrete}
u(x)= \frac1n \sum_{j=1}^n \left\{ \frac12 u(x+\varepsilon e_j) + \frac12 u(x-\varepsilon e_j) \right\}
+o(\varepsilon^2)\quad\textrm{as}\ \varepsilon\to 0,
\end{equation}
where $\{e_1, \dots, e_n\}$ is the canonical basis of $\mathbb{R}^n$.
There are many other mean value formulas for linear elliptic  operators  other than the Laplacian, see \cite{Littman.et.al1963}, and
 for degenerate elliptic equations, see \cite{BL12}. 
 
In recent years asymptotic mean value formulas were found for nonlinear operators
such as the normalized (also called homogeneous) $p$-Laplacian, 
\[
\Delta_p^N u=
|\nabla u|^{2-p}\,\text{div}\big(|\nabla u|^{p-2}\nabla u\big)
=
 \Delta u+(p-2)\, \Delta_{\infty}^{N}u,
\]
for $1<p  < \infty$.
 These mean value formulas come from the connection 
   between probability (via the
   dynamic programming principle for  Tug-of-War games) 
   and the normalized infinity Laplacian, see \cite{LeGruyerArcher,LeGruyer,PSSW}. A 
nonlinear mean value property for $p$-harmonic functions first appeared in \cite{[Manfredi et al. 2010]}  motivated by the Random Tug-of-War games with noise in \cite{PSSW2}. 
It was proved in \cite{[Manfredi et al. 2010]}  that $p$-harmonic functions
are characterized by the fact that they satisfy the following  asymptotic mean value formula
 \begin{equation}\label{MVFormula}
u(x)
=
\left(\frac{p-2}{p+n}\right)
\left(\frac{\displaystyle \max_{{B}_\varepsilon (x)}u+\min_{B_\varepsilon  (x)}u}{2}\right)
+
\left(\frac{2+n}{p+n}\right)\dashint_{B_\varepsilon  (x)}u(y)\, dy+o(\varepsilon^2)\quad\textnormal{as}\ \ \varepsilon \to
0,
\end{equation}
in the viscosity sense. The  asymptotic mean value formula \eqref{MVFormula} holds in the viscosity sense if 
 whenever a smooth test function with non-vanishing gradient  touches $u$ from above (respectively below) at  a point $x$, the mean value formula \eqref{MVFormula} 
is satisfied with $\leq$ (respectively $\geq$) for the test function at  $x$. This is weaker than requiring the asymptotic formula to hold in the classical sense, yet enough to characterize $p$-harmonic functions. 
For mean value properties for the $p$-Laplacian in the Heisenberg group see \cite{LMan} and for the standard variational $p-$Laplacian, 
see \cite{dTLin}. See also  \cite{Angel.Arroyo.Tesis} and the recently published book \cite{[Blanc and Rossi 2019]} for historical references and  more general equations. 

It is worth noting that the expression
$$\left(\frac{p-2}{p+n}\right)
\left( \displaystyle \frac{\displaystyle \max_{{B}_\varepsilon (x)}u+\min_{B_\varepsilon  (x)}u}{\displaystyle 2}\right)
+
\left(\frac{2+n}{p+n}\right)\dashint_{B_\varepsilon  (x)}u(y)\, dy$$
in the asymptotic 
mean value property \eqref{MVFormula} has a game-theoretic interpretation for which it is essential that the coefficients are positive and  add up to 1, so that they play the role of conditional probabilities.

For asymptotic mean value properties for the elliptic Monge-Amp\`ere  equation 
\begin{equation}\label{MVP.solid.M-A.visco}
\det D^2u=f\quad \textrm{in}\ \Omega
\end{equation}
we refer to our recent paper \cite{[Blanc et al. 2020]}. For the equation 
to fit into the framework of the theory of fully nonlinear elliptic equations, one must look for 
convex solutions $u$ to ensure that $\det (D^2 u)$ is
indeed a monotone function of $D^2u$. Thus, one requires the right-hand side $f(x)$ to be non-negative. 
See \cite{Ca-Ni-Sp,Ca-Ni-Sp2}.
Moreover, 
the Monge-Amp\`ere equation can be expressed as an infimum of a family of linear operators as follows,
\begin{equation}\label{first.main.ingredient.intro.88}
 \big(\det D^2u(x)\big)^{1/n}= \frac1n \inf_{\det A=1} {\rm trace}(A^tD^2u(x)A).
\end{equation}

Let   $\phi(\varepsilon)$ 
be a positive function 
such that 
\begin{equation}\label{hipotesis.phi}
 \lim_{\varepsilon\to0} \phi(\varepsilon)=\infty
 \qquad
  \textrm{and}
 \qquad
 \lim_{\varepsilon\to0} \varepsilon\,\phi(\varepsilon) =0.
\end{equation}
Then,
a convex function $u\in C(\Omega)$ is a viscosity solution of the Monge-Amp\`ere equation 
\eqref{MVP.solid.M-A.visco}
if and only if for every $x\in \Omega$ we have
\begin{equation}\label{MVP.solid.visco}
\begin{split}
u(x)
=
\mathop{\mathop{\inf}_{\det A=1}}_{A\leq \phi(\varepsilon)I}
\bigg\{
\dashint_{B_{\varepsilon}(0)}
u(x+Ay)
\,dy
\bigg\}
-\frac{n}{2(n+2)}\,(f(x))^{1/n}\,\varepsilon^2
+o(\varepsilon^2)
\quad\textnormal{as}\ \ \varepsilon \to
0,
\end{split}
\end{equation}
in the viscosity sense, see \cite{[Blanc et al. 2020]}. 
We remark that classical $C^{2}$ solutions to the Monge-Amp\`ere equation
satisfy \eqref{MVP.solid.visco} in the standard pointwise sense.

\subsection{Asymptotic mean value formulas for parabolic equations}
In the linear case, 
$u$ is a solution to the heat equation
$$
\frac{\partial u}{\partial t} (x,t) = \Delta u(x,t)
$$ 
if and only if $u$ satisfies the mean value formula 
$$
u(x,t) = \int_{E(x,t;r)} u(y,s) \frac{|x-y|^2}{(t-s)^2} dy ds,
$$
where the integral is taken over the heat ball 
$$
E(x,t;r) = \Big\{ (y,s)\in \mathbb{R}^{n+1} : s\leq t, (4\pi (t-s))^{1/2} e^{\frac{(x-y)^2}{4n(t-s)}} \leq r \Big\}.
$$
Concerning mean value formulas for
the heat equation we refer also to \cite{Watson} and \cite{Aimar}. 
For a version with variable coefficients, see \cite{Fabes} and for a proof of the rigidity of the formula see \cite{Ko, SuWa}.

It turns out  there is a simpler asymptotic mean value formula where we only need to integrate over the parabolic cylinder
$ B_{\eps}(x)  \times (t-\frac{\eps^2}{n+2}, t)$ and there is no need for a kernel; i.e., $u$ is 
 a solution to the heat equation if and only if $u$ verifies
$$
u(x,t)=\aveint{t-\frac{\eps^2}{n+2} }{t} \kint_{B_{\eps}(x)} u(y,s) \, dy\, ds
+o(\eps^2), \quad \mbox{as }\eps \to 0.
$$
Equivalently, the asymptotic mean value formula
\[
u(x,t)= \aveint{t-\eps^2}{t}\kint_{B_\eps (x)} u(y,s)
\, dy \, ds +o(\eps^2), \quad \mbox{as } \eps \to 0,
\]
holds in the viscosity sense
if and only if $u$ is a  solution to
$$
(n+2)\, u_t (x,t)=\Delta u (x,t).
$$

For the normalized parabolic $p$-Laplacian it is convenient to write the equation in the form
\begin{equation}\label{eq:pequation}
(n+p)\,u_t (x,t) = |\nabla u|^{2-p}\Delta_p u (x,t).
\end{equation}
This equation has been studied in  \cite{MPRparab},  where a 
game approximation (that leads to an asymptotic mean value formula) was analyzed. Namely, a function $u$
solves \eqref{eq:pequation} if and only if the following expansion holds in the viscosity sense
\[
\begin{split}
u(x,t)= \frac{1}{2}
\left(\frac{p-2}{p+n}\right)
\dashint_{t-\eps^2}^{t}&\Big( \max_{y\in  B_\eps(x)}
u(y,s) + \min_{y\in B_\eps (x)} u(y,s) \Big) d s
\\
&+\left(\frac{2+n}{p+n}\right) \dashint_{t-\eps^2}^{t} \dashint_{B_\eps (x)} u(y,s)
\, d y\,  d s +o(\eps^2), 
\quad\textrm{as $\eps \to 0$.}
\end{split}
\]

\section{Main results} Our main goal is to obtain mean value formulas for parabolic versions of the Monge-Amp\`ere equation
and other nonlinear parabolic equations. 

\subsection{Parabolic Monge-Amp\`ere} 
 First we show that an asymptotic, 
 nonlinear mean value formula holds for two different parabolic versions of the Monge-Amp\`ere equation. The first one reads as follows,
 \begin{equation} \label{monge.ampere.1}
 \frac{\partial u}{\partial t} (x,t) = ( \det{(D^2u (x,t))})^{1/n} + f(x,t),
 \end{equation}
and has been studied in \cite{Chop,DasSav,Fir,Ham}
in relation to geometric evolution problems.

 Our first result describes an asymptotic mean value formula for this equation. 
 
\begin{theorem} \label{te1.intro} Let   $\phi(\varepsilon)$ 
be a positive function 
such that 
\begin{equation}\label{hipotesis.phi.intro.teo}
 \lim_{\varepsilon\to0} \phi(\varepsilon)=\infty
 \qquad
  \textrm{and}
 \qquad
 \lim_{\varepsilon\to0} \varepsilon\,\phi(\varepsilon) =0.
\end{equation}
A function $u\in C(\Omega \times (t_1,t_2))$ that is convex in the spatial variables 
is a viscosity solution of the Monge-Amp\`ere equation 
\eqref{monge.ampere.1}
if and only if 
\begin{equation}\label{MVP.pabob.1}
\begin{split}
u(x,t)
=
\mathop{\mathop{\inf}_{\det (A)=1}}_{A\leq \phi(\varepsilon)I}
\bigg\{
\dashint_{t- \frac{n}{n+2} \varepsilon^2}^t\dashint_{B_{\varepsilon}(0)}
u(x+Ay,s)
\,dy\,ds
\bigg\}
+ \frac{n}{2(n+2)}\, f(x,t)\,\varepsilon^2
+o(\varepsilon^2)
\end{split}
\end{equation}
as $\varepsilon\to0$ for $x\in \Omega$, $t \in (t_1,t_2)$,
in the viscosity sense. 
\end{theorem}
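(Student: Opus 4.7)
The plan is to follow the standard viscosity approach: test $u$ against a smooth $\varphi$ touching it at $(x,t)$, Taylor-expand $\varphi(x+Ay,s)$ over the parabolic cylinder $B_\varepsilon(0)\times(t-\tfrac{n}{n+2}\varepsilon^2,t)$, average, take the infimum over the admissible matrices $A$, and invoke the infimum representation \eqref{first.main.ingredient.intro.88} of $(\det D^2\varphi)^{1/n}$. This gives both implications simultaneously: the ``$\le$'' inequality of \eqref{MVP.pabob.1} at admissible test functions touching from above corresponds (after rearranging and dividing by $\varepsilon^2$) to the subsolution inequality $\varphi_t-(\det D^2\varphi)^{1/n}\le f$, and analogously for supersolutions.

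The key computation is the following. Since $A\le \phi(\varepsilon) I$ and $|y|\le \varepsilon$, one has $|Ay|\le \varepsilon\,\phi(\varepsilon)\to 0$, so a second-order Taylor expansion of $\varphi$ around $(x,t)$ is valid uniformly in $A$. Odd moments vanish by symmetry of $B_\varepsilon(0)$, so
\begin{equation*}
\dashint_{t-\tfrac{n}{n+2}\varepsilon^2}^{t}\dashint_{B_{\varepsilon}(0)}\varphi(x+Ay,s)\,dy\,ds
= \varphi(x,t) - \tfrac{n}{2(n+2)}\,\varepsilon^2\,\varphi_t(x,t)
+ \tfrac{\varepsilon^2}{2(n+2)}\,\mathrm{trace}\bigl(A^t D^2\varphi(x,t)\,A\bigr) + R(\varepsilon,A),
\end{equation*}
where the remainder $R$ is $o(\varepsilon^2)$ uniformly in the admissible set, because the second-derivative modulus of continuity of $\varphi$ is evaluated on a ball of radius $\varepsilon\phi(\varepsilon)\to 0$, and the $\varphi_t$-term from averaging in time over an interval of length $\tfrac{n}{n+2}\varepsilon^2$ yields precisely the coefficient needed to cancel against the equation. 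Taking the infimum over $\{\det A=1,\,A\le\phi(\varepsilon)I\}$ and applying \eqref{first.main.ingredient.intro.88} produces $\tfrac{n\varepsilon^2}{2(n+2)}(\det D^2\varphi(x,t))^{1/n}$. Combined with the $+\tfrac{n}{2(n+2)}f(x,t)\varepsilon^2$ correction in \eqref{MVP.pabob.1} and after cancelling $\varphi(x,t)$ from both sides, one obtains, in the limit $\varepsilon\to 0$, the equation $\varphi_t(x,t)=(\det D^2\varphi(x,t))^{1/n}+f(x,t)$.

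The main obstacle is twofold. First, one must ensure that the infimum in \eqref{MVP.pabob.1} is truly approximated (up to $o(\varepsilon^2)$) by the value $\tfrac{n}{2(n+2)}(\det D^2\varphi)^{1/n}\varepsilon^2$ predicted by \eqref{first.main.ingredient.intro.88}. When $D^2\varphi(x,t)$ is strictly positive definite, the unconstrained infimum is attained at a fixed $A^\ast$ independent of $\varepsilon$, so for small $\varepsilon$ the constraint $A\le\phi(\varepsilon)I$ is inactive and the identity is exact; but when $D^2\varphi(x,t)$ has a zero eigenvalue (which is allowed in the degenerate Monge-Amp\`ere setting) an optimizing sequence has one eigenvalue of $A$ blowing up, so one must check that the admissibility bound $\phi(\varepsilon)\to\infty$ does approach the infimum while $\varepsilon\phi(\varepsilon)\to 0$ still controls the Taylor error. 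Second, one needs that the ``bad'' case $D^2\varphi\not\ge 0$ does not cause trouble: in that case the infimum tends to $-\infty$, which forces the inequality obtained from $\varphi$ touching from above to fail, correctly excluding such test functions from the subsolution condition (this matches the viscosity convention where the operator equals $-\infty$ off the convex cone).

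The only other step requiring care is the passage from the viscosity inequalities back to \eqref{MVP.pabob.1}: one combines the pointwise asymptotic equality obtained along test functions with the continuity of $u$ to conclude, following the now-standard template from \cite{[Manfredi et al. 2010]} and \cite{[Blanc et al. 2020]} for the elliptic case, which is precisely why those results can be quoted for the analogous elliptic identity \eqref{first.main.ingredient.intro.88}. All remaining calculations, namely the two averaged integrals
\[
\dashint_{B_\varepsilon(0)}\langle D^2\varphi\, Ay, Ay\rangle\,dy = \tfrac{\varepsilon^2}{n+2}\mathrm{trace}(A^t D^2\varphi A), \qquad \dashint_{t-\tfrac{n}{n+2}\varepsilon^2}^{t}(s-t)\,ds = -\tfrac{n}{2(n+2)}\varepsilon^2,
\]
are routine and explain the specific choice of the temporal interval length $\tfrac{n}{n+2}\varepsilon^2$, which is designed precisely to align the parabolic time term with the spatial second-order term produced by the infimum.
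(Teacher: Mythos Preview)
Your overall strategy matches the paper's, but the claim that the Taylor remainder $R(\varepsilon,A)$ is $o(\varepsilon^2)$ \emph{uniformly} over the admissible set $\{\det A=1,\ A\le\phi(\varepsilon)I\}$ is not justified and is in general false. The averaged remainder is controlled by
\[
\omega\bigl(\varepsilon\phi(\varepsilon)\bigr)\,\dashint_{B_\varepsilon}|Ay|^2\,dy
=\omega\bigl(\varepsilon\phi(\varepsilon)\bigr)\,\frac{\varepsilon^2}{n+2}\,\mathrm{trace}(A^tA),
\]
where $\omega$ is the modulus of continuity of $D^2\varphi$; since $\mathrm{trace}(A^tA)$ can be as large as $n\,\phi(\varepsilon)^2$, the uniform bound you actually get is $\omega(\varepsilon\phi(\varepsilon))\,\phi(\varepsilon)^2\cdot\varepsilon^2$. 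Nothing in the hypotheses forces $\omega(\varepsilon\phi(\varepsilon))\,\phi(\varepsilon)^2\to0$: take $\phi(\varepsilon)=\varepsilon^{-1/2}$ (which satisfies \eqref{hipotesis.phi.intro.teo}) and a $C^{2,1}$ test function with $D^2\varphi$ merely Lipschitz, so $\omega(r)\sim r$; then $\omega(\varepsilon^{1/2})\,\varepsilon^{-1}\sim\varepsilon^{-1/2}\to\infty$. This breaks precisely the \emph{lower} bound on the infimum, where you must control every admissible $A$, including those with an eigenvalue near $\phi(\varepsilon)$. Your remark that for $D^2\varphi>0$ the unconstrained infimum is attained at a fixed $A^\ast$ only yields the upper bound.

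The paper avoids this by not isolating the remainder as a separate $o(\varepsilon^2)$ term. For each fixed $\eta>0$ it sandwiches the test function between the exact paraboloids $P_\eta^\pm(y,s)=P(y,s)\pm\eta\bigl(\tfrac12|y-x|^2+|t-s|\bigr)$ on a fixed neighborhood; the averages of $P_\eta^\pm$ are then computed \emph{exactly}, and the remainder is absorbed into the Hessian as $D^2\varphi\pm\eta I$. One then needs the lemma that
\[
\mathop{\mathop{\inf}_{\det A=1}}_{A\leq \phi(\varepsilon)I}\mathrm{trace}\bigl(A^t(D^2\varphi\pm\eta I)A\bigr)\longrightarrow n\bigl(\det D^2\varphi\bigr)^{1/n}\qquad\text{as }\varepsilon,\eta\to0,
\]
(Lemma~\ref{lemma.convergence.eta}), which is where the assumption $\phi(\varepsilon)\to\infty$ is actually used. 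In the degenerate case $\det D^2\varphi(x,t)=0$ the lower bound is obtained separately, directly from spatial convexity of the test function (so that the spatial average dominates the center value). Your sketch flags both of these issues but does not resolve them; simply invoking \eqref{first.main.ingredient.intro.88} skips the work.
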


For the precise definition of a viscosity solution and the statement of a mean value formula in the viscosity sense
we refer to Section \ref{sect-prelim} (see also \cite{CIL} and \cite{[Manfredi et al. 2010]}). Informally, an equation or a mean value property
holds in the viscosity sense when it holds with the appropriate inequality instead of an equality for smooth functions that touch $u$ from above 
or from below at $(x,t)$. 

The mean value property \eqref{MVP.pabob.1} involves an average of $u$ both in space and in time. 
Notice the parabolic character of the time average in formula \eqref{MVP.pabob.1}, where we are integrating over a time 
interval of length comparable to $\varepsilon^2$.

\begin{remark}\label{remark.polar.decomposition} 
 We assume without loss of generality that the matrices $A$ that appear  
 throughout this paper are symmetric and positive definite. This assumption is not restrictive
 since we can use the (unique) left polar decomposition of $A$, namely $A = SQ$
(where $Q$ is orthogonal and $S$ is a positive definite symmetric matrix) and consider $S$ instead 
of a general matrix $A$ in our formulas.
\end{remark}

\begin{remark} \label{rem-MVP-puntual}
The concept of a mean value formula in the viscosity sense is weaker than a mean value formula that
holds in a pointwise sense. In fact, there are examples of asymptotic mean value formulas that hold in the
viscosity sense but do not hold pointwise, like the ones that hold for the infinity Laplacian, see \cite{[Manfredi et al. 2010]}, and for the elliptic 
Monge-Amp\`ere equation, see \cite{[Blanc et al. 2020]}.  
When a solution to the involved equation is smooth, $u(x,t) \in C^{2,1}$, the mean value property
holds pointwise. This is a consequence of the fact that for $C^{2,1}$ functions we have formula \eqref{MVP.pabob.1} in the classical pointwise sense. \end{remark}

\begin{remark} \label{remark-evaluando} We can also obtain a mean value property 
evaluating at $t- \frac{n}{2(n+2)} \varepsilon^2$ instead of averaging in time;  that is,
it holds that $u$ is a viscosity solution of the parabolic Monge-Amp\`ere equation 
\eqref{monge.ampere.1}
if and only if 
\begin{equation}\label{MVP.pabob.1.777}
\begin{split}
u(x,t)
=
\mathop{\mathop{\inf}_{\det (A)=1}}_{A\leq \phi(\varepsilon)I}
\bigg\{
\dashint_{B_{\varepsilon}(0)}
u\Big(x+Ay, t- \frac{n}{2(n+2)} \varepsilon^2\Big)
\,dy 
\bigg\}
+ \frac{n}{2(n+2)}\, f(x,t)\,\varepsilon^2
+o(\varepsilon^2),
\end{split}
\end{equation}
in the viscosity sense.
\end{remark}

A different version of the parabolic Monge-Amp\`ere equation reads 
 \begin{equation} \label{monge.ampere.2}
 - \frac{\partial u}{\partial t} (x,t) \cdot \mbox{det} \big(D^2u (x,t)\big) =  f(x,t)
 \end{equation}
and appears in connection with the movement
 of a hypersurface by Gauss-Kronecker curvature, see \cite{Tso}. 
 For the study of this equation we refer to \cite{GH, Xiong, Zhang} and concerning regularity of 
 the solutions to \cite{Gut, Tang, WW1, WW2}.   The
 asymptotic behavior at infinity of global solutions has been studied in \cite{WangBao}.

In order for the general viscosity theory to work in this case,  one has to 
 restrict to solutions that 
 are parabolically convex. A function $u:\Omega \times (t_1,t_2) \to \mathbb{R}$,  
 is \textit{parabolically convex}  if it is continuous, convex in $x$, and
 non-increasing in $t$. Therefore, we assume that the right-hand side $f$ is nonnegative.
 
 Our next result shows that there is also a mean value formula in this case.

\begin{theorem} \label{te2.intro} Let  $\phi(\varepsilon)$ 
be a positive function 
such that 
\begin{equation}\label{hipotesis.phi.intro.teo.33}
 \lim_{\varepsilon\to0} \phi(\varepsilon)=\infty
 \qquad
  \textrm{and}
 \qquad
 \lim_{\varepsilon\to0} \varepsilon\,\phi(\varepsilon) =0.
\end{equation}
A parabolically convex function $u\in C(\Omega \times (t_1,t_2))$ is a viscosity solution of the Monge-Amp\`ere equation 
\eqref{monge.ampere.2}
if and only if 
\begin{equation}\label{MVP.pabob.2}
\begin{split}
u(x,t)
= \!\!\!\!\!\!\!\!\!
\mathop{\mathop{\inf}_{\det (A) \times b =1}}_{A\leq \phi(\varepsilon)I,\ b \leq \phi (\varepsilon)}
\!\!\!\!\!\! \bigg\{
\dashint_{t-  \frac{b^2}{n+2} \varepsilon^2}^t\dashint_{B_{\varepsilon}(0)}
u(x+Ay, s)
\,dy\,ds
\bigg\}
-\frac{n+1}{2(n+2)}\,(f(x,t))^{\frac{1}{n+1}}\,\varepsilon^2
+o(\varepsilon^2)
\end{split}
\end{equation}
as $\varepsilon\to0$ for $x\in \Omega$, $t \in (t_1,t_2)$
in the viscosity sense. 
\end{theorem}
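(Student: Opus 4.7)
The plan is to adapt the elliptic Monge--Amp\`ere argument of \cite{[Blanc et al. 2020]} to the parabolic setting, using a scalar parameter $b$ in the infimum to encode the time derivative. The starting point is the parabolic analogue of \eqref{first.main.ingredient.intro.88}: applying AM--GM to the eigenvalues of the block--diagonal $(n+1)\times(n+1)$ matrix $\mathrm{diag}(A^tD^2uA,-b^2u_t)$, whose determinant equals $(\det A)^2b^2(-u_t)\det D^2u$, one obtains
\[
\bigl(-u_t\,\det D^2u\bigr)^{1/(n+1)}=\frac{1}{n+1}\inf_{\substack{A>0,\;b>0\\ \det(A)\,b=1}}\bigl(\mathrm{trace}(A^tD^2uA)-b^2u_t\bigr),
\]
valid whenever $-u_t\ge 0$ and $D^2u\ge 0$, with equality attained when $A^tD^2uA$ and $-b^2u_t$ coincide with a common multiple of the identity. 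This rewrites the operator in \eqref{monge.ampere.2} as an infimum of linear parabolic operators and fixes the prefactor $\tfrac{n+1}{2(n+2)}$ in the mean value formula.

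The second ingredient is the Taylor expansion of the space--time average for a test function $\psi\in C^{2,1}$. Using
\[
\dashint_{B_\varepsilon(0)}y\,dy=0,\qquad\dashint_{B_\varepsilon(0)}y_iy_j\,dy=\tfrac{\varepsilon^2}{n+2}\delta_{ij},\qquad\dashint_{t_0-\tau}^{t_0}(s-t_0)\,ds=-\tfrac{\tau}{2},
\]
with $\tau=\tfrac{b^2\varepsilon^2}{n+2}$, one finds for each admissible $(A,b)$ that
\[
\dashint_{t_0-\tau}^{t_0}\!\!\dashint_{B_\varepsilon(0)}\psi(x_0+Ay,s)\,dy\,ds=\psi(x_0,t_0)+\frac{\varepsilon^2}{2(n+2)}\bigl(\mathrm{trace}(A^tD^2\psi A)-b^2\psi_t\bigr)+R(A,b,\varepsilon),
\]
where $R(A,b,\varepsilon)=o(\varepsilon^2)$ uniformly for $\|A\|+b\le M$. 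This uniformity is precisely where \eqref{hipotesis.phi.intro.teo.33} enters: the admissible matrices in \eqref{MVP.pabob.2} satisfy $\|A\|+b\le 2\phi(\varepsilon)$, so $|Ay|\le\phi(\varepsilon)\varepsilon$ and $\tau\le\phi(\varepsilon)^2\varepsilon^2$ both vanish by $\varepsilon\,\phi(\varepsilon)\to 0$, while $\phi(\varepsilon)\to\infty$ ensures the constrained infimum exhausts the unconstrained one in the limit. Combined with the identity of the first step, the mean value formula at the level of $\psi$ becomes equivalent, modulo $o(\varepsilon^2)$, to $(n+1)(-\psi_t\det D^2\psi)^{1/(n+1)}=(n+1)f^{1/(n+1)}$, i.e.\ to the equation \eqref{monge.ampere.2}.

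With these pieces in hand, both directions of the equivalence follow by the standard two-sided viscosity argument: if $\psi$ touches $u$ from below (respectively above) at $(x_0,t_0)$, the corresponding one-sided mean value inequality translates through the expansion above into the one-sided inequality that characterizes $u$ as a super- (respectively sub-) solution. The main obstacle, exactly as in the elliptic case, is the degenerate scenario $\det D^2\psi(x_0,t_0)=0$ or $\psi_t(x_0,t_0)=0$, in which the infimum in the AM--GM identity is only approached in the limit of diverging $A$ or $b$, so one cannot directly pick an admissible near-optimizer with norm bounded independently of $\varepsilon$. I would handle it as in \cite{[Blanc et al. 2020]}: perturb the test function by $\psi_\eta(x,t)=\psi(x,t)+\eta\bigl(|x-x_0|^2-(t-t_0)\bigr)$ to render it strictly parabolically convex, run the non-degenerate analysis for $\psi_\eta$ (where the explicit optimizer $(A_*,b_*)$ has norm depending only on $D^2\psi_\eta(x_0,t_0)$ and $(\psi_\eta)_t(x_0,t_0)$, hence is admissible for $\varepsilon$ small enough because $\phi(\varepsilon)\to\infty$), and let $\eta\to 0^+$ using the continuity of $(M,\tau)\mapsto(-\tau\det M)^{1/(n+1)}$ on the cone $\{M\ge 0,\,\tau\le 0\}$. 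The pointwise statement of Remark~\ref{rem-MVP-puntual} is then recovered for free: if $u\in C^{2,1}$, applying the Taylor expansion directly to $\psi=u$ turns \eqref{MVP.pabob.2} into a classical pointwise identity.
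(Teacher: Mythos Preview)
Your strategy is the same as the paper's: both rewrite the operator via the block matrix $C_{b,A}=\mathrm{diag}(b,A)\in\mathbb{S}^{n+1}$ so that $b^2(-\psi_t)+\tr(A^tD^2\psi A)=\tr\bigl(C_{b,A}^t\,\mathrm{diag}(-\psi_t,D^2\psi)\,C_{b,A}\bigr)$, then invoke the determinant--trace inequality (Lemma~\ref{caract.determ}) in dimension $n+1$, compute the space--time average of the second-order Taylor polynomial, and pass to viscosity via smooth parabolically convex test functions.

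There is, however, a genuine gap in your treatment of the remainder. You state that $R(A,b,\varepsilon)=o(\varepsilon^2)$ uniformly for $\|A\|+b\le M$, which is true for \emph{fixed} $M$, and then appeal to $\varepsilon\phi(\varepsilon)\to0$ to extend this to the admissible set $\|A\|+b\le 2\phi(\varepsilon)$. But that hypothesis only guarantees $|Ay|\to0$ and $\tau\to0$ (locality); it does \emph{not} make the averaged Taylor remainder $o(\varepsilon^2)$. Indeed, once the displacement is small enough one has $|R|\le\eta\bigl(\tfrac{\varepsilon^2}{n+2}\tr(A^tA)+\tfrac{\tau}{2}\bigr)\le C\eta\,\phi(\varepsilon)^2\varepsilon^2$ for any fixed $\eta>0$, and since $\phi(\varepsilon)\to\infty$ this is not $o(\varepsilon^2)$. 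Consequently your lower bound on the constrained infimum is unjustified in the non-degenerate case as written. The paper avoids this by using the $\eta$-sandwich $P_\eta^\pm(y,s)=P(y,s)\pm\eta\bigl(\tfrac{|y-x|^2}{2}+|t-s|\bigr)$ \emph{throughout}, not only in the degenerate case: the averages of $P_\eta^\pm$ are exact (no remainder), giving the constrained infimum of $\tr\bigl(A^t(D^2\psi\pm\eta I)A\bigr)+b^2(-\psi_t\pm\eta)$, and then Lemma~\ref{lemma.convergence.eta} in dimension $n+1$ handles the joint limit $\eta,\varepsilon\to0$. Your perturbation $\psi_\eta=\psi+\eta(|x-x_0|^2-(t-t_0))$ is essentially this same device; you should deploy it in both cases, not only when $\det D^2\psi=0$ or $\psi_t=0$.
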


Notice that  formula \eqref{MVP.pabob.2} is similar to \eqref{MVP.pabob.1},  except for
the extra parameter $b$ that appears in the infimum, which is related to the  time interval average. 

\begin{remark} \label{remark-evaluando.22} We can again obtain a mean value property 
evaluating at $t- \frac{b^2}{2(n+2)} \varepsilon^2$ instead of averaging in time; that is, the 
parabolically convex function 
$u\in C(\Omega \times (t_1,t_2))$ is a viscosity solution of the Monge-Amp\`ere equation 
\eqref{monge.ampere.2}
if and only if 
\begin{equation}\label{MVP.pabob.2.22}
\begin{split}
u(x,t)
= \!\!\!\!\!\!\!\!\!
\mathop{\mathop{\inf}_{\det (A) \times b =1}}_{A\leq \phi(\varepsilon)I,\ b \leq \phi (\varepsilon)}
\!\!\! \!\!\! \bigg\{ \dashint_{B_{\varepsilon}(0)}
u\left(x+Ay, t-  \frac{b^2}{2(n+2)} \varepsilon^2\right)
\,dy\,ds \bigg\}
-\frac{n+1}{2(n+2)}\,(f(x,t))^{\frac{1}{n+1}}\,\varepsilon^2
+o(\varepsilon^2)
\end{split}
\end{equation}
as $\varepsilon\to0$ 
in the viscosity sense. 
\end{remark}

 \subsection{Bounded Operators}

\subsubsection{Infimum operators}\label{subsec.infimums}

Let us start with the differential operator $F:\Omega \times \R \times \R \times \mathbb{S}^n(\R)\to \R$ given by
\begin{equation}\label{operator.bounded.A}
F\Big(x,t,\frac{\partial u}{\partial t}, D^2u \Big)=\inf_{(A,b)\in \A_{x,t}} \Big\{ \tr(A^tD^2u(x,t)A)- b \frac{\partial u}{\partial t} (x,t) \Big\} .
\end{equation}
Here $\A_{x,t}\subset \mathbb{S}_+^n(\mathbb{R})\times \mathbb{R}^+$ is a bounded subset for each point $(x,t) \in\R^n\times \mathbb{R}^+$ and  $\mathbb{S}_+^n(\mathbb{R})$ denotes the set of symmetric positive semi-definite matrices.
Examples of these operators include parabolic equations related to Pucci operators and evolution problems associated with the convex envelope. 
See Section 5 below for details.

 
 Our next result gives an asymptotic mean value formula that characterizes viscosity solutions of the corresponding homogeneous equation.

\begin{theorem} \label{th.sol.viscosa.intro} A function $u\in C(\Omega \times (t_1,t_2))$, is a viscosity solution of the equation
\begin{equation} \label{PDE.inf}
F \Big(x,t, \frac{\partial u}{\partial t}, D^2u \Big) =0, 
\end{equation} 
if and only if
\begin{equation}
\label{eq.main}
u(x,t) = \inf_{(A,b)\in \A_{x,t}}
\dashint_{t- \frac{b}{n+2} \varepsilon^2}^t \dashint_{B_{\varepsilon}(0)}
u(x+Ay, s)
\,dy\,ds
+
o(\varepsilon^2),
\end{equation}
as $\eps\to0$ in the viscosity sense.
\end{theorem}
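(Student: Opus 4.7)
The plan is to derive the equivalence from a uniform second-order Taylor expansion of the test function around the touching point, which converts the averaging operator on the right-hand side of \eqref{eq.main} into the differential operator $F$ modulo $o(\varepsilon^2)$.

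First, I fix a smooth test function $\varphi$ defined near $(x_0,t_0)\in\Omega\times(t_1,t_2)$ and write
\[
\varphi(x_0+Ay,s)=\varphi(x_0,t_0)+\nabla\varphi\cdot Ay+(s-t_0)\,\partial_t\varphi+\tfrac12\,y^tA^tD^2\varphi\,Ay+R(A,y,s),
\]
with all derivatives of $\varphi$ evaluated at $(x_0,t_0)$. Since $\mathcal A_{x_0,t_0}\subset\mathbb S_+^n(\R)\times\R^+$ is bounded, there exists $C$ such that $\|A\|\le C$ and $b\le C$ for every $(A,b)\in\mathcal A_{x_0,t_0}$; hence $|Ay|\le C\varepsilon$ on $B_\varepsilon(0)$ and $|s-t_0|\le C\varepsilon^2$ on the time interval of integration. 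Consequently $R(A,y,s)=O(\varepsilon^3)$ uniformly in $(A,b)\in\mathcal A_{x_0,t_0}$.

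Next, I compute the relevant spatial and temporal moments: $\dashint_{B_\varepsilon(0)}y\,dy=0$ by symmetry, $\dashint_{B_\varepsilon(0)}y_iy_j\,dy=\tfrac{\varepsilon^2}{n+2}\delta_{ij}$, and a direct calculation gives $\dashint_{t_0-\tau}^{t_0}(s-t_0)\,ds=-\tau/2$ with $\tau=\tfrac{b}{n+2}\varepsilon^2$. Substituting these identities yields
\[
\dashint_{t_0-\frac{b}{n+2}\varepsilon^2}^{t_0}\dashint_{B_\varepsilon(0)}\varphi(x_0+Ay,s)\,dy\,ds=\varphi(x_0,t_0)+\frac{\varepsilon^2}{2(n+2)}\Big[\tr(A^tD^2\varphi\,A)-b\,\partial_t\varphi\Big]+o(\varepsilon^2),
\]
where the $o(\varepsilon^2)$ is uniform in $(A,b)\in\mathcal A_{x_0,t_0}$. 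Because the error is uniform, the infimum can be exchanged with it, and one obtains
\[
\inf_{(A,b)\in\mathcal A_{x_0,t_0}}\dashint_{t_0-\frac{b}{n+2}\varepsilon^2}^{t_0}\dashint_{B_\varepsilon(0)}\varphi(x_0+Ay,s)\,dy\,ds=\varphi(x_0,t_0)+\frac{\varepsilon^2}{2(n+2)}F\big(x_0,t_0,\partial_t\varphi,D^2\varphi\big)+o(\varepsilon^2).
\]

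Finally, the equivalence asserted in the theorem is a direct consequence of this identity. If $\varphi$ touches $u$ from above at $(x_0,t_0)$ and $u$ is a viscosity subsolution of \eqref{PDE.inf}, then $F(x_0,t_0,\partial_t\varphi,D^2\varphi)\ge0$, and the expansion produces the inequality $\varphi(x_0,t_0)\le \inf_{(A,b)}\dashint\dashint\varphi+o(\varepsilon^2)$ that defines the mean value formula in the viscosity sense. The supersolution case and both converse implications follow by reversing the sign of the appropriate inequality. I expect the principal obstacle to be precisely the justification of the uniform $o(\varepsilon^2)$ remainder over the whole family $\mathcal A_{x_0,t_0}$, since without uniformity the infimum cannot be pulled through the asymptotic identity; this is the step where the boundedness of $\mathcal A_{x,t}$ enters essentially.
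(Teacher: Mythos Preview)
Your proposal is correct and follows essentially the same approach as the paper: a uniform Taylor expansion of the test function, the moment identities (the paper's Lemma~\ref{lemma.trace.integral}), and the observation that boundedness of $\mathcal A_{x,t}$ makes the $o(\varepsilon^2)$ remainder uniform so that the infimum can be passed through. One minor point: for $C^{2,1}$ test functions the remainder is only $o(\varepsilon^2)$ rather than $O(\varepsilon^3)$, but this is exactly what the argument needs and does not affect your conclusion.
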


Observe that an analogous statement holds for suprema, and hence,  we can tackle operators of the form
\begin{equation}\label{operator.bounded.A.sup}
F\Big(x,t,\frac{\partial u}{\partial t}, D^2u \Big)=\sup_{(A,b)\in \A_{x,t}}\Big\{ \tr(A^tD^2u(x,t)A)- b \frac{\partial u}{\partial t} (x,t) \Big\} .
\end{equation}

\begin{remark}\label{remark.local} 
Since the sets $\A_{x,t}$ are bounded, the formula is local.
In fact, for every $x\in\Omega$, $t\in(t_1,t_2)$  
there exists $C=C(x,t)>0$ such that $A\leq C I$ and $b\leq C$ for every $(A,b)\in\A_{x,t}$ and we get
\[ 
\textrm{dist}(x+Ay,x)=|Ay|\leq C \varepsilon \to 0, \qquad \mbox{and} \qquad 0\leq t- s \leq  \frac{b}{n+2}\,\eps^2\le \frac{C}{n+2}\,\eps^2\to 0
\]
as $\eps\to 0$ for every $y\in B_\varepsilon(0)$ and $ s\in (t-\eps^2\frac{b\,n}{n+2},t)$.  
In particular observe that for $\eps$ small enough $(x+Ay, s)\in\Omega \times (t_1,t_2)$ for every $y\in B_\eps(0)$ and $s\in (t-\eps^2\frac{b}{n+2},t)$.
\end{remark}

\subsubsection{Supremum-Infimum operators}\label{subsec.sup-infimums}

Our next step is to consider a special type of Isaacs operators where the supremum (or the infimum) is taken over 
a subset $\mathbb{A}_{x,t}$ of the parts of $\mathbb{S}_+^n(\mathbb{R})\times\mathbb{R}^+$. To be more precise, 
let $\mathbb{A}_{x,t}\subset \mathcal P (\mathbb{S}_+^n(\mathbb{R})\times\mathbb{R}^+)$ be a subset for each $x\in\Omega$, $t\in (t_1,t_2)$ such that 
\[
\bigcup\mathbb{A}_{x,t}
=
\Big\{
(A,b)\in \mathbb{S}_+^n(\mathbb{R})\times\mathbb{R}^+\ : \
(A,b)\in \mathcal{A}\ \textrm{for some}\ \mathcal{A}\in\mathbb{A}_{x,t}
\Big\}
\]
 is bounded.
We consider the differential operator $F:\Omega \times \R \times \R \times \mathbb{S}^n(\R)\to \R$ given by
\begin{equation}\label{isasc.pablo.intro}
F\Big(x, t, \frac{\partial u}{\partial t}, D^2u \Big) =\sup_{\mathcal{A}\in\mathbb{A}_{x,t}} \inf_{(A,b)\in \A }
\Big\{ \textrm{trace}(A^tD^2u(x,t)A)
- b \frac{\partial u}{\partial t} (x,t) \Big\}.
\end{equation}
Examples of these operators include evolution problems for the eigenvalues of the Hessian. See Section 5 below for  details.

\begin{theorem} \label{th.sol.viscosa.intro.22} A function $u\in C(\Omega \times (t_1,t_2))$, is a viscosity solution to
\begin{equation} \label{PDE.inf.22}
F\Big(x, t, \frac{\partial u}{\partial t}, D^2u \Big)  =0, 
\end{equation} 
if and only if
\begin{equation}\label{sup.inf.MVP.formula}
u(x,t) = \sup_{\mathcal{A}\in\mathbb{A}_{x,t}} \inf_{(A,t)\in \A }
\dashint_{t- \frac{b}{n+2}\varepsilon^2}^t \dashint_{B_{\varepsilon}(0)}
u(x+Ay, s)
\,dy\, ds
+
o(\varepsilon^2),
\end{equation}
as $\eps\to0$ in the viscosity sense.
\end{theorem}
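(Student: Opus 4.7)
The plan is to reduce everything to a Taylor expansion of a smooth test function, using the boundedness of $\bigcup\mathbb{A}_{x,t}$ to obtain a uniform remainder, and then to run the standard viscosity comparison. The structure mirrors that of Theorem \ref{th.sol.viscosa.intro}; the only new feature is that the outer supremum over $\mathcal{A}\in\mathbb{A}_{x,t}$ has to be passed through the expansion as well, which works because it is order-preserving and the inner infimum has already been dealt with.

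The first step is a direct calculation. Fix a test function $\phi\in C^{2,1}$ touching $u$ at $(x,t)$, and fix any admissible pair $(A,b)$. Taylor-expanding $\phi(x+Ay,s)$ around $(x,t)$, the linear term $\langle\nabla\phi,Ay\rangle$ has zero mean on $B_\varepsilon(0)$; the quadratic spatial term contributes $\frac{\varepsilon^2}{2(n+2)}\tr(A^tD^2\phi A)$ via $\dashint_{B_\varepsilon(0)}y_iy_j\,dy=\frac{\varepsilon^2}{n+2}\delta_{ij}$; and the temporal term $\phi_t(s-t)$ integrates to $-\frac{b}{2(n+2)}\varepsilon^2\phi_t$. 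Thus
\begin{equation*}
\dashint_{t-\frac{b}{n+2}\varepsilon^2}^{t}\dashint_{B_{\varepsilon}(0)}\phi(x+Ay,s)\,dy\,ds=\phi(x,t)+\frac{\varepsilon^2}{2(n+2)}\bigl[\tr(A^tD^2\phi(x,t)A)-b\,\phi_t(x,t)\bigr]+R(A,b,\varepsilon),
\end{equation*}
where, by the $C^{2,1}$-regularity of $\phi$ and the uniform bounds $\|A\|+b\le C(x,t)$ supplied by the boundedness of $\bigcup\mathbb{A}_{x,t}$, the remainder satisfies $|R(A,b,\varepsilon)|\le\omega(\varepsilon)\varepsilon^2$ for a modulus $\omega(\varepsilon)\to 0$ depending only on $\phi$ and $C(x,t)$, and is therefore $o(\varepsilon^2)$ uniformly in $(A,b)$.

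This uniformity is crucial, because it lets me take the infimum over $(A,b)\in\mathcal{A}$ (a common additive error of size $o(\varepsilon^2)$ commutes with $\inf$) and then the supremum over $\mathcal{A}\in\mathbb{A}_{x,t}$, arriving at the fundamental identity
\begin{equation*}
\sup_{\mathcal{A}\in\mathbb{A}_{x,t}}\inf_{(A,b)\in\mathcal{A}}\dashint_{t-\frac{b}{n+2}\varepsilon^2}^{t}\dashint_{B_\varepsilon(0)}\phi(x+Ay,s)\,dy\,ds=\phi(x,t)+\frac{\varepsilon^2}{2(n+2)}F\bigl(x,t,\phi_t,D^2\phi\bigr)+o(\varepsilon^2).
\end{equation*}
From here the equivalence follows by the usual viscosity scheme: if $\phi$ touches $u$ from above at $(x_0,t_0)$, the pointwise inequality $u\le\phi$ is preserved by averaging and by the monotone operations $\inf$ and $\sup$, so the double $\sup$-$\inf$ expression for $u$ is bounded above by the same expression for $\phi$; combined with $u(x_0,t_0)=\phi(x_0,t_0)$ and the identity above, the mean value formula \eqref{sup.inf.MVP.formula} in the viscosity sense at $(x_0,t_0)$ becomes equivalent to the pointwise inequality $F(x_0,t_0,\phi_t,D^2\phi)\ge 0$. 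The case of touching from below is symmetric, so the two viscosity notions coincide.

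The main obstacle is precisely the uniformity of the remainder in the first step. If the $o(\varepsilon^2)$ depended on $(A,b)$, the outer $\sup\inf$ could in principle shift the leading-order coefficient and invalidate the identity. The boundedness hypothesis on $\bigcup\mathbb{A}_{x,t}$ is what rescues the argument, since it makes $|Ay|$ and the time shift $b\varepsilon^2/(n+2)$ comparable to $\varepsilon$ and $\varepsilon^2$ respectively, uniformly over all admissible pairs, so that the Peano remainder for $C^{2,1}$ functions yields a single modulus $\omega(\varepsilon)$ good for every $(A,b)$ in the family.
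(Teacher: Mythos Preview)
Your argument is correct and is exactly the paper's approach: the paper's own proof in Section~\ref{sec.sup-inf} simply says the result ``follows as before'' from the computation for Theorem~\ref{th.sol.viscosa.intro}, and your fundamental identity with the uniform $o(\varepsilon^2)$ remainder (coming from the boundedness of $\bigcup\mathbb{A}_{x,t}$) is precisely that computation, with the sup passed through. Two small wobbles worth noting: under the paper's sign conventions (Definition~\ref{def.sol.viscosa.1}), $\phi$ touching from above corresponds to the subsolution inequality $F\le 0$, not $\ge 0$; and the comparison $M(u)\le M(\phi)$ you mention is unnecessary, since Definition~\ref{def.sol.viscosa.asymp} phrases the viscosity mean value property directly in terms of the test function $\phi$, so your fundamental identity already gives the equivalence without ever evaluating the mean on $u$.
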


\begin{remark} \label{remark-evaluando.77} In the previous two cases we can 
evaluate at $t-  \frac{b}{2(n+2)} \varepsilon^2$ instead of taking averages in time and get a mean value formula of the form
\begin{equation}
\label{pepe}
u(x,t) =\sup_{\mathcal{A}\in\mathbb{A}_{x,t}} \inf_{(A,t)\in \A }
 \dashint_{B_{\varepsilon}(0)}
u \Big(x+Ay, t- \frac{b}{2(n+2)} \varepsilon^2 \Big)
\,dy\, ds
+
o(\varepsilon^2),\quad \textrm{as $\varepsilon\to0$.}
\end{equation}
\end{remark}

The paper is organized as follows: In Section \ref{sect-prelim} we gather some definitions and preliminary results;
in Section \ref{sect-parab-MA} we prove the mean value formulas for the two versions of the parabolic Monge-Amp\`ere
equation;
in Section \ref{sec.infimums} we deal with infimum and sup-inf operators;
finally, Section \ref{sec-DPP}, contains a brief discussion of the relation between these mean value formulas and 
Dynamic Programming Principles from game theory.

\section{Preliminaries} \label{sect-prelim}

In this section we set the notation, recall basic results on the Monge-Amp\`ere equation,  and state some definitions.
We begin by stating the definition of a viscosity solution to a fully nonlinear second order parabolic PDE.
We refer to
\cite{CIL} for general results on viscosity solutions.

Given a continuous function
\[
F:\Omega\times\R \times \R\times\mathbb{S}^n(\R)\to\R
\]
where $\mathbb{S}^n(\R)$ denotes the set of symmetric $n\times n$ matrices,
 we consider the PDE 
\begin{equation}
\label{eqvissol}
F\left(x,t,\frac{\partial u}{\partial t} (x,t), D^2u (x,t) \right) =0, \qquad x \in \Omega,\ t \in (t_1,t_2).
\end{equation}
Viscosity solutions use the monotonicity of $F$ in $D^2u$ (ellipticity) and in $\frac{\partial u}{\partial t} $ in order to 
evaluate the equation for smooth test functions and obtain sub- and supersolution  inequalities. 

\begin{definition}
\label{def.sol.viscosa.1}
A lower semi-continuous function $ u $ is a viscosity
supersolution of \eqref{eqvissol} if for every $ \phi \in C^{2,1}$ such that $ \phi $
touches $u$ at $(x,t) \in \Omega \times (t_1,t_2)$ strictly from below (that is, $u-\phi$ has a strict minimum at $(x,t)$ 
with $u(x,t) = \phi (x,t)$), we have
$$F \Big(x,t, \frac{\partial \phi}{\partial t} (x,t),D^2\phi(x,t)\Big)\geq 0.$$
An upper semi-continuous function $u$ is a subsolution of \eqref{eqvissol} if
for every $ \phi \in C^{2,1}$ such that $\phi$ touches $u$ at $ (x,t) \in
\Omega \times (t_1,t_2)$ strictly from above (that is, $u-\phi$ has a strict maximum at $(x,t)$ with $u(x,t) = \phi(x,t)$), we have
$$F \Big(x,t, \frac{\partial \phi}{\partial t} (x,t),D^2\phi(x,t)\Big)\leq 0.$$
Finally, $u$ is a viscosity solution of \eqref{eqvissol} if it is both a super- and a subsolution.
\end{definition}

To apply this definition to the parabolic Monge-Amp\`ere equation \eqref{monge.ampere.1} we have to consider the operator
\[
F\Big(x,t,\frac{\partial u}{\partial t} (x,t), D^2u (x,t) \Big)
=
\begin{cases}
 \displaystyle \frac{\partial u}{\partial t} (x,t) - ( \det{(D^2u (x,t))})^{1/n} - f(x,t) 
&\text { if } D^2u (x,t)\geq 0\\[10pt]
-\infty
&\text { otherwise.}
\end{cases}
\]
This is equivalent to requiring the function to be convex in the space variable and restricting the test functions to paraboloids convex in space.
Similarly, for solutions to equation \eqref{monge.ampere.2} we require the function to be parabolically convex (convex in $x$ and  non-increasing in $t$) and restrict the test functions to parabolically convex paraboloids, defined next.

\begin{definition}\label{def.paraboloides}
A $C^{2,1}$ function $P(x,t)$ is a parabolic paraboloid if and only if it coincides with its second order Taylor expansion in $x$ and first order in $t$, 
i.e., we have
\[
P(x,t)=P(x_0,t_0)+ \frac{\partial P}{\partial t} (x_0,t_0) (t-t_0) + \langle\nabla P(x_0,t_0),(x-x_0)\rangle+\frac12\langle D^2 P(x_0,t_0)(x-x_0),(x-x_0)\rangle
\]
for any given $(x_0,t_0)$. Furthermore, $P(x,t)$ is a parabolically convex paraboloid if and only if 
$\frac{\partial P}{\partial t} \geq 0$ and $D^2P\geq 0$.
\end{definition}

We will also need the definition of an asymptotic mean value formula in the viscosity sense.
First, recall that given a constant $c$ and a real function $g$ we write
$$c\le g(\varepsilon) + o(\varepsilon^2) \quad \text{ as } \varepsilon\to 0$$ whenever we have
$$\lim_{\varepsilon\to 0} \frac{\left[ c-g(\varepsilon)\right]^+}{\varepsilon^2}=0, $$
and 
$$c\ge g(\varepsilon) + o(\varepsilon^2) \quad \text{ as } \varepsilon\to 0$$ whenever we have
$$\lim_{\varepsilon\to 0} \frac{\left[ c-g(\varepsilon)\right]^-}{\varepsilon^2}=0.$$

In the next definition $M (u, \varepsilon ) (x)$ stands for a mean value operator (that depends on the parameter
$\varepsilon$) applied to $u$ at the point $x$. As an example, consider
$$
M (u, \varepsilon ) (x,t) = \inf_{(A,b)\in\A_{x,t}} \dashint_{t-b\,\eps^2}^t
\dashint_{B_{\varepsilon}(0)}
u(x+Ay,s)
\,dy\,ds.
$$

\begin{definition} \label{def.sol.viscosa.asymp}
A continuous function  $u$  verifies
$$
u(x,t) = M (u, \varepsilon ) (x,t) + o(\varepsilon^2), \quad \mbox{as }\varepsilon \to 0,
$$
\emph{in the viscosity sense} if
\begin{enumerate}
\item for every $ \phi\in C^{2,1}$ such that $ u-\phi $ has a strict
minimum at the point $(x,t) \in \Omega \times (t_1,t_2)$  with $u(x,t)=
\phi(x,t)$, we have
$$
0 \geq - \phi (x,t) + M (\phi, \varepsilon ) (x,t) + o(\varepsilon^2).
$$

\item for every $ \phi \in C^{2,1}$ such that $ u-\phi $ has a
strict maximum at the point $ (x,t) \in {\Omega} \times (t_1,t_2)$ with $u(x,t)=
\phi(x,t)$, we have
$$
0 \leq - \phi (x,t) + M (\phi, \varepsilon ) (x,t)
 + o(\varepsilon^2).
$$
\end{enumerate}
\end{definition}

The following elementary fact will be used several times in the sequel. 
\begin{lemma}\label{lemma.trace.integral}
Let $M$ be a square matrix of dimension $n$. Then,
\begin{align}
{\rm trace}(M)
&=
\frac{n}{\varepsilon^2}
\dashint_{\partial B_\varepsilon (0)}
\langle My,y\rangle
\,d\mathcal{H}^{n-1}(y),
\label{trace.surface.integral.representation}
\\
&=
\frac{n+2}{\varepsilon^2}
\dashint_{B_\varepsilon (0)}
\langle My,y\rangle
\,dy,
\label{trace.solid.integral.representation}
\end{align}
\end{lemma}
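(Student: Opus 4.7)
The plan is to reduce both identities to the elementary moment computations
\[
\dashint_{\partial B_\varepsilon(0)} y_i y_j \, d\mathcal{H}^{n-1}(y) = \frac{\varepsilon^2}{n}\,\delta_{ij},
\qquad
\dashint_{B_\varepsilon(0)} y_i y_j \, dy = \frac{\varepsilon^2}{n+2}\,\delta_{ij},
\]
and then expand $\langle My,y\rangle = \sum_{i,j} M_{ij} y_i y_j$, so that averaging picks out exactly the diagonal entries of $M$ and produces $\mathrm{trace}(M)$ up to the stated constants.

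First, I would justify the off-diagonal vanishing. For $i\neq j$, the map $y\mapsto (y_1,\ldots,-y_i,\ldots,y_n)$ preserves both $\partial B_\varepsilon(0)$ and $B_\varepsilon(0)$, flipping the sign of the integrand $y_iy_j$, so the integrals vanish. For the diagonal entries, by the rotational symmetry of the ball and sphere, $\int y_i^2$ is independent of $i$, hence
\[
\int y_i^2 = \tfrac{1}{n} \sum_{k=1}^n \int y_k^2 = \tfrac{1}{n}\int |y|^2.
\]

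For the surface identity, $|y|^2 = \varepsilon^2$ on $\partial B_\varepsilon(0)$, so $\dashint_{\partial B_\varepsilon(0)} |y|^2 \,d\mathcal{H}^{n-1} = \varepsilon^2$, giving $\dashint y_i^2 = \varepsilon^2/n$. For the solid identity, polar coordinates give
\[
\int_{B_\varepsilon(0)} |y|^2 \,dy = |\partial B_1|\int_0^\varepsilon r^{n+1}\,dr = \frac{|\partial B_1|\,\varepsilon^{n+2}}{n+2} = \frac{n\,|B_\varepsilon|\,\varepsilon^2}{n+2},
\]
so $\dashint_{B_\varepsilon(0)} y_i^2 \,dy = \varepsilon^2/(n+2)$.

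Combining,
\[
\dashint \langle My,y\rangle = \sum_{i,j} M_{ij}\dashint y_i y_j = \sum_i M_{ii}\cdot c_\varepsilon = c_\varepsilon \,\mathrm{trace}(M),
\]
with $c_\varepsilon = \varepsilon^2/n$ in the surface case and $c_\varepsilon = \varepsilon^2/(n+2)$ in the solid case, yielding the two formulas after multiplying by $n/\varepsilon^2$ and $(n+2)/\varepsilon^2$ respectively. There is no real obstacle: the entire argument rests on linearity and the symmetries of the Euclidean ball; the only thing to be mildly careful about is keeping track of constants in the polar-coordinate calculation for the solid mean.
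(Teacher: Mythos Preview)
Your argument is correct and complete: the symmetry considerations kill the off-diagonal moments, rotational invariance equalizes the diagonal ones, and the polar-coordinate computation for $\dashint_{B_\varepsilon}|y|^2\,dy$ is accurate. The paper itself does not prove this lemma, merely stating it as an elementary fact, so there is no approach to compare against; your write-up is exactly the standard justification one would expect.
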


For symmetric square matrices, $A>0$ means positive definite and $A\geq0$ means positive semidefinite. We will denote $\lambda_i(A)$ the eigenvalues of $A$, in particular $\lambda_{\min}(A)$ and $\lambda_{\max}(A)$ are the smallest and largest eigenvalues, respectively. We  have the following linear algebra facts.

\begin{lemma}\label{caract.determ}
Let $M$ be symmetric and $M\geq 0$. Then,
\[
\inf_{\det A=1}{\rm trace}(A^tMA)=n (\det(M))^{1/n}.
\]
On the other hand, if $M$ has negative eigenvalues, then the  infimum is $-\infty$.
\end{lemma}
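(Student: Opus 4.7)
The plan is to reduce the minimization to a spectral problem to which the AM--GM inequality applies cleanly. By Remark \ref{remark.polar.decomposition}, it suffices to consider symmetric positive definite $A$, so that $A^tMA = AMA$. Setting $B = A^2$, the map $A\mapsto B$ is a bijection between symmetric positive definite matrices with $\det A = 1$ and symmetric positive definite matrices with $\det B = 1$; the problem becomes minimizing $\tr(MB)$ over $B>0$ with $\det B = 1$.

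For the main case $M>0$, I use the similarity $\tr(MB) = \tr(M^{1/2}BM^{1/2})$. The matrix $N := M^{1/2}BM^{1/2}$ is symmetric positive definite with $\det N = \det M$, and the AM--GM inequality applied to its eigenvalues $\mu_1(N),\ldots,\mu_n(N)$ gives
\[
\tr(N) = \sum_{i=1}^n \mu_i(N) \;\geq\; n\Big(\prod_{i=1}^n \mu_i(N)\Big)^{1/n} = n(\det M)^{1/n},
\]
with equality iff all $\mu_i(N)$ coincide. This happens precisely when $N = (\det M)^{1/n}I$, i.e.\ $B = (\det M)^{1/n}M^{-1}$; then $A = B^{1/2}$ satisfies $\det A = 1$ and realizes the infimum.

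For the degenerate cases I work in an orthonormal basis diagonalizing $M$. If $M\geq 0$ has exactly $k\geq 1$ zero eigenvalues, I take $B_\delta$ diagonal with entry $\delta$ in the $n-k$ positions corresponding to positive eigenvalues and entry $\delta^{-(n-k)/k}$ in the $k$ kernel directions; then $\det B_\delta = \delta^{n-k}\cdot\delta^{-(n-k)} = 1$ and $\tr(MB_\delta) = \delta\sum_{\lambda_i>0}\lambda_i \to 0 = n(\det M)^{1/n}$. Combined with the trivial lower bound $\tr(MB)\geq 0$ for $M,B\geq 0$, this yields the formula. If instead $M$ has a negative eigenvalue $\lambda_{i_0} < 0$, I take $B_\delta$ diagonal with $\delta^{-(n-1)}$ in position $i_0$ and $\delta$ elsewhere; then $\det B_\delta = 1$ and $\tr(MB_\delta) = \lambda_{i_0}\delta^{-(n-1)} + \delta\sum_{i\neq i_0}\lambda_i \to -\infty$ as $\delta\to 0^+$, so the infimum is $-\infty$.

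The only real obstacle is bookkeeping in the degenerate semidefinite case: arranging a family $B_\delta$ that simultaneously satisfies $\det B_\delta = 1$ and drives $\tr(MB_\delta)$ to zero. Once the eigenbasis of $M$ is fixed and the trace is split along $\ker M$, this reduces to the elementary diagonal scaling above; the AM--GM step carries all the analytic content of the lemma.
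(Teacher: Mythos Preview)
Your proof is correct. The paper itself does not prove this lemma but refers to \cite{[Blanc et al. 2020]}; your argument is the standard one and is self-contained: the substitution $B=A^2$ together with $\tr(AMA)=\tr(MA^2)=\tr(MB)$ reduces the problem to minimizing $\tr(MB)$ over $B>0$ with $\det B=1$, and for $M>0$ the AM--GM inequality applied to the eigenvalues of $M^{1/2}BM^{1/2}$ gives both the bound and the minimizer $B=(\det M)^{1/n}M^{-1}$. Your treatment of the degenerate cases (kernel directions when $M\geq0$ is singular, and a single negative-eigenvalue direction when $M$ is not positive semidefinite) via explicit diagonal scalings is clean and correct; the lower bound $\tr(MB)\geq0$ for $M,B\geq0$ follows from $\tr(MB)=\tr(M^{1/2}BM^{1/2})$.

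One trivial remark: your construction for the negative-eigenvalue case tacitly assumes $n\geq2$ (otherwise $\delta^{-(n-1)}=1$ and nothing diverges), but in dimension $n=1$ the statement itself degenerates, so this is not an issue for the paper.
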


For a proof of Lemma \ref{caract.determ} we refer to \cite{[Blanc et al. 2020]}.

\begin{lemma}\label{lemma.uniform.ellipticity.visco}
Let $M >0$. Then, for every 
\begin{equation}  \label{lemma.uniform.ellipticity.visco.theta0}
\theta> \theta_0:=\left(\frac{\left(\det{M}\right)^{1/n}}{\lambda_{\min} (M)}\right)^{1/2},
\end{equation}
we have
\begin{equation}  \label{lemma.uniform.ellipticity.visco.statement}
 \inf_{\det A=1}{\rm trace}\big(A^t M A\big)
 =
 \mathop{\mathop{\inf}_{\det A=1}}_{A\leq \theta I}
{\rm trace}(A^t M A) .
\end{equation}
\end{lemma}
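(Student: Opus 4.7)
The plan is to reduce the claim to exhibiting one feasible witness. The inequality ``$\le$'' in \eqref{lemma.uniform.ellipticity.visco.statement} is automatic, since the right-hand side is an infimum over the smaller class of matrices that additionally satisfy $A\le \theta I$; so the unconstrained infimum is no larger than the constrained one. For the reverse direction, it suffices to produce a single matrix $A^*$ with $\det A^*=1$ and $A^*\le \theta I$ whose value $\textrm{trace}\bigl((A^*)^t M A^*\bigr)$ matches the unconstrained infimum, which by Lemma \ref{caract.determ} equals $n\,(\det M)^{1/n}$.

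To construct $A^*$, I will diagonalize $M$ and then use Remark \ref{remark.polar.decomposition} to restrict attention to symmetric positive definite candidates. Write $M=P^t D P$ with $P$ orthogonal and $D=\textrm{diag}(\lambda_1(M),\dots,\lambda_n(M))$, and look for an $A^*$ that is diagonal in the same basis,
\[
A^* = P^t\,\textrm{diag}(a_1,\dots,a_n)\,P,
\qquad
a_i=\left(\frac{(\det M)^{1/n}}{\lambda_i(M)}\right)^{1/2}.
\]
Then $\det A^* = \prod_i a_i =1$ and each $a_i^2\lambda_i(M) = (\det M)^{1/n}$, so
\[
\textrm{trace}\bigl((A^*)^t M A^*\bigr)
=\sum_{i=1}^n a_i^2\,\lambda_i(M)
=n\,(\det M)^{1/n}.
\]
Since the $a_i$ are largest when $\lambda_i(M)$ is smallest, $\lambda_{\max}(A^*)=\left((\det M)^{1/n}/\lambda_{\min}(M)\right)^{1/2}=\theta_0<\theta$, so $A^*\le \theta I$ as required. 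Combining this with Lemma \ref{caract.determ} sandwiches the constrained infimum between two equal quantities and finishes the proof.

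The only conceptual point worth flagging is what the proof \emph{avoids}: one does not need to prove that $A^*$ is in fact the minimizer over all symmetric positive definite $A$ with $\det A=1$ (which would ordinarily call for Lagrange multipliers, an AM--GM argument, or the observation that any optimizer commutes with $M$). Lemma \ref{caract.determ} already supplies the numerical value $n(\det M)^{1/n}$ of the unconstrained infimum, so the entire task is to show feasibility of one matrix attaining that value under the extra constraint $A\le \theta I$. The hypothesis $\theta>\theta_0$ enters only through the final bound $\lambda_{\max}(A^*)=\theta_0<\theta$.
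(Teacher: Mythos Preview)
Your proof is correct and follows essentially the same approach as the paper: exhibit a specific minimizer $A^*$ that satisfies the extra constraint $A^*\le \theta_0 I<\theta I$. In fact your matrix $A^*=P^t\,\textrm{diag}\!\left(\bigl((\det M)^{1/n}/\lambda_i(M)\bigr)^{1/2}\right)P$ is exactly the paper's $A^*=(\det M)^{1/(2n)}M^{-1/2}$ written in the eigenbasis of $M$; you have simply spelled out the verification in coordinates where the paper states it intrinsically.
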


\begin{proof}
In general, the right-hand side is larger than the left-hand side as we are taking infimum over an smaller set.
The infimum is realized for $A^*=\left(\det{M}\right)^{\frac{1}{2n}} M^{-1/2}$.
Then the result follows since $A^*\leq \theta_0 I$.
\end{proof}

\begin{lemma}
For $M> 0$ we have 
\begin{equation}
\label{determinant.estimates1}
\left(\det\left(M\pm\eta I\right)\right)^{1/n}
=
\left(\det{M}\right)^{1/n}
+O(\eta)
\qquad
\textrm{as $\eta\to0$}.
\end{equation}
For $M\geq 0$ we have 
\begin{equation}
\label{determinant.estimates2}
\left(\det\left(M+\eta I\right)\right)^{1/n}
=
\left(\det{M}\right)^{1/n}
+O(\eta)
\qquad
\textrm{as $\eta\to0$}.
\end{equation}

\end{lemma}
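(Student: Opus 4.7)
The plan is to reduce both estimates to a Taylor expansion of $s\mapsto s^{1/n}$ at $s=\det(M)$, after diagonalizing $M$. Since $M$ is symmetric, choose an orthogonal $Q$ with $Q^TMQ=\mathrm{diag}(\lambda_1,\ldots,\lambda_n)$. Because $\eta I$ is invariant under orthogonal conjugation and the determinant is invariant under similarity,
\[
\det(M\pm\eta I) = \prod_{i=1}^n (\lambda_i\pm\eta) = \det(M) + \sum_{k=1}^n (\pm\eta)^k\, e_{n-k}(\lambda_1,\ldots,\lambda_n),
\]
where $e_j$ denotes the $j$-th elementary symmetric polynomial. All coefficients depend only on the spectrum of $M$, so in particular $\det(M\pm\eta I) = \det(M)+O(\eta)$ as $\eta\to 0$, with an implicit constant depending only on $M$.

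For \eqref{determinant.estimates1}, the hypothesis $M>0$ gives $\det M>0$, so $s\mapsto s^{1/n}$ is $C^\infty$ on a neighborhood of $\det M$. For $|\eta|$ small enough, $\det(M\pm\eta I)$ stays in that neighborhood, and a first-order Taylor expansion at $s=\det M$ yields
\[
\bigl(\det(M\pm\eta I)\bigr)^{1/n} = (\det M)^{1/n} + \tfrac{1}{n}(\det M)^{1/n-1}\cdot O(\eta) + O(\eta^2) = (\det M)^{1/n} + O(\eta).
\]
For \eqref{determinant.estimates2} with $\det M>0$ the argument is identical. In the boundary case $\det M=0$, the factorization $\prod_{i}(\lambda_i+\eta) = \eta^{k}\prod_{\lambda_j>0}(\lambda_j+\eta)$, where $k$ is the number of vanishing eigenvalues, isolates a non-degenerate factor to which the Taylor expansion above applies, while the explicit $\eta^k$ factor is handled directly.

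\textbf{Main obstacle.} The one delicate point is the passage from the expansion of $\det(M\pm\eta I)$ to that of its $n$-th root when $\det M$ is close to $0$, because $s\mapsto s^{1/n}$ loses Lipschitz regularity at $s=0$. This is exactly why the first estimate is stated under $M>0$, and why the degenerate case of the second estimate requires one to separate the vanishing and strictly positive eigenvalues before applying the Taylor argument; everything else is just the algebraic expansion of the product above.
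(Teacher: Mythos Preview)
Your treatment of the non-degenerate case $M>0$ is correct and essentially the same as the paper's: both expand $\det(M\pm\eta I)$ in elementary symmetric polynomials of the eigenvalues to obtain $\det(M\pm\eta I)=\det M+O(\eta)$, and then use the smoothness of $s\mapsto s^{1/n}$ near $s=\det M>0$ (you via a first-order Taylor expansion, the paper via the Mean Value Theorem) to pass to the $n$-th root.

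The degenerate case $\det M=0$ has a genuine gap. Your factorization yields
\[
\bigl(\det(M+\eta I)\bigr)^{1/n}=\eta^{k/n}\Bigl(\prod_{\lambda_j>0}(\lambda_j+\eta)\Bigr)^{1/n},
\]
where $k\ge1$ is the number of zero eigenvalues; the second factor is bounded above and below by positive constants for small $\eta>0$, so the whole expression is of exact order $\eta^{k/n}$, which is \emph{not} $O(\eta)$ whenever $1\le k<n$. Concretely, for $n=2$ and $M=\mathrm{diag}(1,0)$ one has $\bigl(\det(M+\eta I)\bigr)^{1/2}=\sqrt{\eta(1+\eta)}\sim\eta^{1/2}$, so the $O(\eta)$ estimate fails. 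Your sentence ``the explicit $\eta^k$ factor is handled directly'' therefore cannot be carried out as written. This is in fact a defect of the lemma itself rather than merely of your argument: the paper's own proof, which asserts that the upper bound from the $M>0$ case ``is still at our disposal,'' has the same flaw, since the Mean Value Theorem step used to derive that bound breaks down when $\det M=0$. What your factorization \emph{does} establish is the weaker but correct estimate $\bigl(\det(M+\eta I)\bigr)^{1/n}=O(\eta^{1/n})$, and this (indeed, mere convergence to $0$) is all that is actually needed where the lemma is subsequently invoked.
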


\begin{proof}
To see this, first notice that
\begin{equation}\label{det.expansion.formula}
\det\left(M\pm\eta I\right)
=
\det{M}
+
\sum_{k=1}^{n}(\pm \eta)^{k}\sigma_{n-k}(M),
\end{equation}
where the coefficients $\sigma_{n-k}(M)$ in the expansion are given by the 
elementary symmetric polynomials on the eigenvalues of $M$, which are positive.
Therefore, 
\[
\left(
\det{M}-C\eta
\right)^{1/n}
\leq
\left(
\det\left(M\pm\eta I\right)
\right)^{1/n}
\leq
\left(
\det{M} + C\eta
\right)^{1/n}
\]
for some $C>0$.
Then, the Mean Value Theorem applied to $g(t)=t^{1/n}$, with $a=\det M$ and $b=\det M+C\eta$ gives that there 
exists $\xi\in(a,b)$ such that
\[
\left(
\det M + C\eta
\right)^{1/n}=
g(b)
=g(a)+g'(\xi)(b-a)
\leq
\left(
\det{M}
\right)^{1/n}
+C\eta.
\]
Similarly, we obtain that
\[
\left(
\det{M} - C\eta
\right)^{1/n}
\geq
\left(
\det{M}
\right)^{1/n}
-C\eta
\]
and \eqref{determinant.estimates1} follows.

It remains to prove \eqref{determinant.estimates2} in the case $\det M=0$.
In this case the upper bound obtained is still at our disposal.
Therefore we conclude by observing that $\left(\det\left(M+\eta I\right)\right)^{1/n}\geq 0$.
\end{proof}

\begin{lemma}\label{lemma.convergence.eta}
For $M> 0$ we have 
\begin{equation}
\label{estimates1}
\mathop{\mathop{\inf}_{\det A=1}}_{A\leq \phi(\varepsilon)I}
{\rm trace}(A^t (M\pm\eta I) A)
\to
n\left(\det{M}\right)^{1/n} \qquad
\textrm{as $\eps,\eta\to0$}.
\end{equation}
For $M\geq 0$ we have 
\begin{equation}
\label{estimates2}
\mathop{\mathop{\inf}_{\det A=1}}_{A\leq \phi(\varepsilon)I}
{\rm trace}(A^t (M+\eta I) A)
\to
n\left(\det{M}\right)^{1/n}  \qquad
\textrm{as $\eps,\eta\to0$}.
\end{equation}
\end{lemma}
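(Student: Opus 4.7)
The plan is to stitch together the three previous linear-algebra lemmas. The target $n(\det M)^{1/n}$ is just the unconstrained infimum $\inf_{\det A=1}{\rm trace}(A^t M A)$ coming from Lemma \ref{caract.determ}, so the task reduces to (i) replacing $M$ by the perturbation $M\pm\eta I$ and tracking the error, and (ii) showing that the upper-bound constraint $A\le\phi(\varepsilon)I$ is harmless in the limit.

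First I would handle the \emph{perturbation in $M$}. Since $\eta>0$, the matrix $M+\eta I$ is strictly positive definite; in the $\pm$ case, $M-\eta I$ is also strictly positive definite once $\eta<\lambda_{\min}(M)$. Thus $M\pm\eta I$ falls in the scope of Lemma \ref{caract.determ}, giving
\[
\inf_{\det A=1}{\rm trace}\big(A^t(M\pm\eta I)A\big)=n\bigl(\det(M\pm\eta I)\bigr)^{1/n}.
\]
By the determinantal expansion \eqref{determinant.estimates1} (respectively \eqref{determinant.estimates2}), the right-hand side converges to $n(\det M)^{1/n}$ as $\eta\to 0$.

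Next I would invoke Lemma \ref{lemma.uniform.ellipticity.visco} to \emph{remove the bound} $A\le\phi(\varepsilon)I$. With
\[
\theta_0(\eta):=\left(\frac{(\det(M\pm\eta I))^{1/n}}{\lambda_{\min}(M\pm\eta I)}\right)^{1/2},
\]
Lemma \ref{lemma.uniform.ellipticity.visco} tells us that whenever $\phi(\varepsilon)>\theta_0(\eta)$ the constrained and unconstrained infima coincide. Combined with the first step this yields
\[
\mathop{\mathop{\inf}_{\det A=1}}_{A\leq \phi(\varepsilon)I}{\rm trace}(A^t(M\pm\eta I)A)=n\bigl(\det(M\pm\eta I)\bigr)^{1/n}\longrightarrow n(\det M)^{1/n}.
\]

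The main obstacle is verifying that $\phi(\varepsilon)>\theta_0(\eta)$ holds in the limit. When $M>0$ this is painless: $\lambda_{\min}(M\pm\eta I)\to\lambda_{\min}(M)>0$ and $\det(M\pm\eta I)\to\det M<\infty$, so $\theta_0(\eta)$ stays bounded uniformly for small $\eta$, while $\phi(\varepsilon)\to\infty$ by hypothesis \eqref{hipotesis.phi}, so the condition holds for all sufficiently small $\varepsilon$. The delicate case is $M\ge 0$ with a zero eigenvalue: then $\lambda_{\min}(M+\eta I)=\eta$ and $\theta_0(\eta)$ may diverge as $\eta\to 0$. However, since $\phi(\varepsilon)\to\infty$, for each fixed $\eta>0$ the inequality $\phi(\varepsilon)>\theta_0(\eta)$ holds for all $\varepsilon$ small enough (depending on $\eta$), which is exactly what is needed to interpret the joint limit as $\varepsilon,\eta\to 0$ along any sequence with $\varepsilon$ taken small relative to $\eta$. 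This is precisely the regime in which the lemma will be applied in the proofs of Theorems \ref{te1.intro} and \ref{te2.intro}.
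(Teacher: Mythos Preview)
Your argument for the strictly positive case $M>0$ coincides with the paper's: you bound $\theta_0(\eta)$ uniformly in $\eta$ for $\eta$ small, and then the hypothesis $\phi(\varepsilon)\to\infty$ guarantees $\phi(\varepsilon)>\theta_0(\eta)$ for all small $\varepsilon$, independently of $\eta$.

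The degenerate case $\det M=0$ is where your proposal and the paper diverge. Your route through Lemma~\ref{lemma.uniform.ellipticity.visco} forces $\varepsilon$ to depend on $\eta$, because $\theta_0(\eta)\to\infty$; as you yourself note, this yields only the iterated limit (first $\varepsilon\to0$ for fixed $\eta$, then $\eta\to0$), not the full joint limit ``as $\varepsilon,\eta\to0$'' asserted in the lemma. Your claim that this suffices for the applications in Theorems~\ref{te1.intro} and~\ref{te2.intro} is correct, but it leaves the lemma as stated only partially proved. The paper avoids this by abandoning Lemma~\ref{lemma.uniform.ellipticity.visco} in the degenerate case. Instead, given $\delta>0$, it fixes a single matrix $A_0$ with $\det A_0=1$ and ${\rm trace}(A_0^tMA_0)<\delta$ (such $A_0$ exists by Lemma~\ref{caract.determ} since the unconstrained infimum is $0$). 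For all $\varepsilon$ small enough that $\phi(\varepsilon)\ge\lambda_{\max}(A_0)$, this $A_0$ is admissible for the constrained infimum, which is therefore at most
\[
{\rm trace}\big(A_0^t(M+\eta I)A_0\big)={\rm trace}(A_0^tMA_0)+\eta\,{\rm trace}(A_0^2),
\]
and this is $<\delta$ once $\eta$ is small enough. The key point is that the threshold on $\varepsilon$ depends only on $A_0$ (hence on $\delta$), not on $\eta$, so this delivers the genuine joint limit. The lower bound is immediate since the constrained infimum is nonnegative.
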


\begin{proof}
For $M>0$, we consider $\eta<\min\{1,\lambda_{\min}(M)/2\}$, we have
\[
\left(\frac{\left(\det{M \pm\eta I}\right)^{1/n}}{\lambda_{\min} (M \pm\eta I)}\right)^{1/2}
\leq
\left(\frac{\left(\det{M +I}\right)^{1/n}}{\lambda_{\min} (M)/2)}\right)^{1/2}
=\theta_0.
\]
Then, for $\eps_0$ such that for every $\eps<\eps_0$ we have $\phi(\eps)>\theta_0$ combining Lemma~\ref{lemma.uniform.ellipticity.visco} and Lemma~\ref{caract.determ} we get
\[
\mathop{\mathop{\inf}_{\det A=1}}_{A\leq \phi(\varepsilon)I}
{\rm trace}(A^t (M\pm\eta I) A)
=
n \left(\det{M\pm\eta I}\right)^{1/n}.
\]
The result follows from equation \eqref{determinant.estimates1}.

It remains to prove the case $\det(M)=0$.
Given $\delta>0$, by Lemma~\ref{caract.determ} there exists $A_0$ with $\det(A_0)=1$ such that
\[
{\rm trace}(A_0^t M A_0)<\delta.
\]
Then, for $\eps$ such that $A_0\leq \phi(\eps)I$ we have
\[
\mathop{\mathop{\inf}_{\det A=1}}_{A\leq \phi(\varepsilon)I}
{\rm trace}(A^t (M\pm\eta I) A)
\leq {\rm trace}(A_0^t (M+\eta I) A_0)
\]
and by \eqref{determinant.estimates2} for $\eta$ small enough we get
\[
{\rm trace}(A_0^t (M+\eta I) A_0)<\delta
\]
and the result follows.
\end{proof}

\section{Parabolic Monge-Amp\`ere} \label{sect-parab-MA}

\subsection{First version} Let us start with the equation
 \begin{equation} \label{monge.ampere.1.s}
 \frac{\partial u}{\partial t} (x,t) = ( \det{(D^2u (x,t))})^{1/n} + f(x,t).
 \end{equation}
 Our goal is to show that $u$ is a
viscosity solution of the Monge-Amp\`ere equation 
\eqref{monge.ampere.1.s}
if and only if 
\begin{equation}
\begin{split}
u(x,t)
=
\mathop{\mathop{\inf}_{\det (A)=1}}_{A\leq \phi(\varepsilon)I}
\bigg\{
\dashint_{t- \frac{n}{n+2} \varepsilon^2}^t\dashint_{B_{\varepsilon}(0)}
u(x+Ay,s)
\,dy\,ds
\bigg\}
+\frac{n}{2(n+2)}\, f(x,t)\,\varepsilon^2
+o(\varepsilon^2)
\end{split}
\end{equation}
as $\varepsilon\to0$ for $x\in \Omega$
in the viscosity sense.

We first prove the asymptotic mean value property under the restriction that solutions of the Monge-Amp\`ere equation are classical ($C^{2,1}$ solutions) and then we deal with the general case (viscosity solutions). 
We start by proving the result for smooth strictly convex functions (with strictly positive definite Hessian).
We proceed with the proof of  Theorem \ref{te1.intro}.

\begin{proof}[Proof of Theorem \ref{te1.intro}]
As we have mentioned, first we prove the result for classical solutions.
Assume that $u$ is $C^{2,1}$, convex in space and   a solution to 
\begin{equation} \label{monge.ampere.1.s.77}
 \frac{\partial u}{\partial t} (x,t) = ( \det{(D^2u (x,t))})^{1/n} + f(x,t).
 \end{equation}

We use the Taylor expansion of $u(y,s)$, given by
$$
\begin{array}{l}
\displaystyle
u(y,s) = u(x,t) + \frac{\partial u}{\partial t} (x,t) (s-t)  +
\langle \nabla u(x,t), (y-x) \rangle \\[10pt]
\qquad \qquad 
\displaystyle + \frac12 \langle D^2 u (x,t) (y-x), (y-x) \rangle + o(|t-s| + |x-y|^2),
\end{array}
$$
to define the parabolic paraboloid
\[
P(y,s)=u(x,t) + \frac{\partial u}{\partial t} (x,t) (s-t)  +
\langle \nabla u(x,t), (y-x) \rangle + \frac12 \langle D^2 u (x,t) (y-x), (y-x) \rangle.
\]
 Since $u\in C^{2,1}$, we have
\[
u(y,s)-P(y,s)=o(|y-x|^2 + |t-s|) \qquad\textrm{as}\ y \to x, \, s\to t,
\]
which means that for every $\eta >0$, there exists $\delta>0$ such that 
\begin{equation} \label{u-P.o.pequena}
P(y,s)-\eta\left(\frac{|y-x|^2}{2}+|t-s|\right)
\leq
u(y,s)
\leq
P(y,s)+\eta\left(\frac{|y-x|^2}{2}+|t-s|\right)
\end{equation}
for every $(y,s)\in B_\delta(x)\times(t-\delta,t+\delta)$.
For convenience, we denote
\[
P_\eta^\pm(y,s)=P(y,s)\pm\eta\left(\frac{|y-x|^2}{2}+|t-s|\right).
\]

Let us assume first that $D^2u(x,t)>0$.
We have
\[
\begin{split}
 \dashint_{t- \frac{n}{n+2}\varepsilon^2}^t&\dashint_{B_{\varepsilon}(0)}
P_\eta^\pm (x+Ay,s)
\,dy\,ds  - u (x,t)
\\
&= \dashint_{t- \frac{n}{n+2}\varepsilon^2}^t\dashint_{B_{\varepsilon}(0)} \left(\frac{\partial u}{\partial t} (x,t) (s-t) \pm \eta|t-s| \right)  \,dy\,ds
\\
&\qquad \qquad + \dashint_{t- \frac{n}{n+2} \varepsilon^2}^t\dashint_{B_{\varepsilon}(0)} 
\left(
\langle \nabla u(x,t), Ay \rangle + \frac12 \langle D^2 u (x,t) Ay, Ay \rangle \pm\frac\eta2 |Ay|^2
\right)
\,dy\,ds
\\
&= \left(-\frac{\partial u}{\partial t} (x,t) \pm \eta\right)
\dashint_{t-\frac{n}{n+2}\varepsilon^2}^t (s-t)  \,ds 
  +\frac12 \dashint_{B_{\varepsilon}(0)}  \big\langle A^t (D^2 u (x,t) \pm\eta I)Ay, y \big\rangle
\,dy  
\\
& =  \frac{n}{2(n+2)} \,\varepsilon^2 \left(-\frac{\partial u}{\partial t} (x,t) \pm\eta\right)
 +\frac12\dashint_{B_{\varepsilon}(0)}  \big\langle A^t (D^2 u (x,t) \pm\eta I)  Ay, y \big\rangle
\,dy.
\end{split}
\]
By Lemma~\ref{lemma.trace.integral} we get
$$
\dashint_{B_{\varepsilon}(0)} \frac12 \langle A^t \left(D^2u(x,t)\pm\eta I\right) Ay, y \rangle
\,dy   = 
\frac{\varepsilon^2}{2(n+2)}\,\textrm{trace}\left(A^t \left(D^2u(x,t)\pm\eta I\right) A\right),
$$
and hence, we obtain,
\[
\begin{split}
 \dashint_{t- \frac{n}{n+2}\varepsilon^2}^t\dashint_{B_{\varepsilon}(0)}&
P_\eta^\pm (x+Ay,s)
\,dy\,ds - u (x,t)
\\
&=\frac{n}{2(n+2)}\, \varepsilon^2 \left(-\frac{\partial u}{\partial t} (x,t) \pm\eta\right)
 + \frac{\varepsilon^2}{2(n+2)}\,\textrm{trace} ( A^t \left(D^2u(x,t)\pm\eta I\right) A ).
\end{split}
\]
Therefore, 
\[
\begin{split}
\mathop{\mathop{\inf}_{\det (A)=1}}_{A\leq \phi(\varepsilon)I} &
\dashint_{t- \frac{n}{n+2}\varepsilon^2}^t
\dashint_{B_{\varepsilon}(0)}
P_\eta^\pm (x+Ay,s)
\,dy\,ds
 - u(x,t)  
\\
&= 
\frac{n}{2(n+2)}\, \varepsilon^2 \left(-\frac{\partial u}{\partial t} (x,t) \pm\eta\right)
 +\frac{\varepsilon^2}{2(n+2)}\, \mathop{\mathop{\inf}_{\det (A)=1}}_{A\leq \phi(\varepsilon)I} 
 \textrm{trace}\left(A^t\left(D^2u(x,t) \pm \eta I \right)A\right).
\end{split}
\]

Observe that  $A\leq \phi( \varepsilon)I$ and $|y|\leq \varepsilon$ imply
 $x+Ay\in B_\delta(x)$  for $\varepsilon< \varepsilon_0$ (since $\eps\phi(\eps)\to 0$ as $\eps\to 0$). 
Therefore, given $\eta>0$ if $\varepsilon<\varepsilon_0$, we have
 \begin{equation}\label{C2.case.second.part}
P_\eta^-(x+Ay,s)
\leq
u(x+Ay,s)
\leq
P_\eta^+(x+Ay,s)\qquad\textrm{for every}\ y\in B_\varepsilon (0).
\end{equation}
Then, 
we get
\begin{equation}\label{C2.case.final.loop.first.part}
\begin{split}
\frac{n\,\varepsilon^2}{2(n+2)}\, &\Bigg\{\left(-\frac{\partial u}{\partial t} (x,t) -\eta\right)
 +\frac{1}{n}\, \mathop{\mathop{\inf}_{\det (A)=1}}_{A\leq \phi(\varepsilon)I} 
 \textrm{trace}\left(A^t\left(D^2u(x,t) - \eta I \right)A\right)\Bigg\}
\\
&\leq
\mathop{\mathop{\inf}_{\det A=1}}_{
A\leq \phi(\varepsilon)I}
\dashint_{t- \frac{n}{n+2}\varepsilon^2}^t \dashint_{B_{\varepsilon}(0)}
\left(
u(x+Ay,s)-u(x,t)
\right)
\,dy\,ds
\\
&\qquad\qquad\leq
\frac{n\,\varepsilon^2}{2(n+2)}\, \Bigg\{\left(-\frac{\partial u}{\partial t} (x,t) +\eta\right)
 +\frac{1}{n}\, \mathop{\mathop{\inf}_{\det (A)=1}}_{A\leq \phi(\varepsilon)I} 
 \textrm{trace}\left(A^t\left(D^2u(x,t) + \eta I \right)A\right)\Bigg\}.
\end{split}
\end{equation}
From here, the result follows from Lemma \ref{lemma.convergence.eta} since both the upper and lower bound involve an expression that converges to 
\[
-\frac{\partial u}{\partial t} (x,t) + ( \det{(D^2u (x,t))})^{1/n} = -  f(x,t)
\]
as $\eta\to 0$.

Now, if $\det(D^2u(x,t))=0$, then we use a minor modification of the above argument.
The upper bound is still at our disposal but we have to obtain a new lower bound.
Here we use the convexity in space of $u$ to get
\[
\begin{split}
\mathop{\mathop{\inf}_{\det (A)=1}}_{A\leq \phi(\varepsilon)I}
\bigg\{
\dashint_{t- \frac{n}{n+2}\varepsilon^2}^t&\dashint_{B_{\varepsilon}(0)}
u(x+Ay,s)
\,dy\,ds
\bigg\} - u(x,t)
\\
&\geq
\dashint_{t- \frac{n}{n+2}\varepsilon^2}^t
u(x,s)
\,ds
 - u(x,t)
\geq \displaystyle  
-\frac{n}{2(n+2)}\,\varepsilon^2 \left( \frac{\partial u}{\partial t} (x,t) 
  \right) + o(\varepsilon^2)
\end{split}
\]
and the result follows.

For the viscosity case, it is enough 
to use convex (in space) smooth functions as test functions in the definition of viscosity solution.  Let $u(x,t)$ be a viscosity solution to 
$$ 
\frac{\partial u}{\partial t} (x,t) = ( \det{(D^2u (x,t))})^{1/n} + f(x,t).
$$ 
Take $\phi(x,t)$ a $C^{2,1}$ function, convex in space that touches $u$ from above at
$(x,t)$, that is, we have
$$
u(y,s)- \phi(y,s) \leq u(x,t)- \phi(x,t), \qquad \mbox{for every } |y-x|\leq a ,\ t-a\leq s \leq t.
$$
The fact that $u$ is a viscosity solution implies
$$ 
\frac{\partial \phi}{\partial t} (x,t) - ( \mbox{det} (D^2 \phi (x,t)))^{1/n} \leq  f(x,t).
$$
On the other hand, from our previous computations, using that $\phi \in C^{2,1}$, we obtain
\[
\begin{split}
&\mathop{\mathop{\inf}_{\det (A)=1}}_{A\leq \phi(\varepsilon)I} \dashint_{t- \frac{n}{n+2}\varepsilon^2}^t\dashint_{B_{\varepsilon}(0)}
\phi(x+Ay,s)
\,dy\,ds  - \phi(x,t) \\
& = -\frac{n}{2(n+2)}\,\varepsilon^2 \left( \frac{\partial \phi}{\partial t} (x,t) 
 - (\det(D^2 \phi (x,t)))^{1/n} \right) + o(\varepsilon^2)
 \geq
-\frac{n}{2(n+2)}\,\varepsilon^2 f(x,t) + o(\varepsilon^2). 
\end{split}
\]

An analogous computation shows that $\phi(x,t)$ a $C^{2,1}$ convex function that touches $u$ from below at
$(x,t)$ verifies 
$$
\mathop{\mathop{\inf}_{\det (A)=1}}_{A\leq \phi (\varepsilon)I} \displaystyle
\dashint_{t- \frac{n}{n+2}\varepsilon^2}^t\dashint_{B_{\varepsilon}(0)}
\phi (x+Ay,s)
\,dy\,ds  - \phi (x,t)  \leq
-\frac{n}{2(n+2)}\,\varepsilon^2 f(x,t) + o(\varepsilon^2). 
$$
This proves that a viscosity solution to the PDE verifies the asymptotic mean value formula in the viscosity
sense.

For the converse, let $u$ be a convex function that verifies the asymptotic mean value property
in the viscosity sense. Take $\phi (x,t)$ a convex smooth function that touches $u$ from above at
$(x,t)$, then we have 
$$
u(y,s)- u(x,t) \leq \phi (y,s) - \phi (x,t), \qquad \mbox{for every } |y-x|\leq a ,\ t-a\leq s \leq t.
$$
Since $u$ verifies the asymptotic mean value property
in the viscosity sense, we have
$$
\dashint_{t- \frac{n}{n+2}\varepsilon^2}^t\dashint_{B_{\varepsilon}(0)}
\phi(x+Ay,s)
\,dy\,ds  - \phi(x,t)  \geq
-\frac{n}{2(n+2)}\,\varepsilon^2 f(x,t) + o(\varepsilon^2).
$$
Using again our previous computations for a $C^{2,1}$ function, we obtain
$$ 
-\frac{n}{2(n+2)}\,\varepsilon^2 \left( \frac{\partial \phi}{\partial t} (x,t) 
 - (\det(D^2 \phi (x,t)))^{1/n} \right) \geq
-\frac{n}{2(n+2)}\,\varepsilon^2 f(x,t) + o(\varepsilon^2)
$$
Taking the limit as $\varepsilon \to 0$ we obtain
$$
 \left( \frac{\partial \phi }{\partial t} (x,t) 
 - (\det(D^2 \phi (x,t)))^{1/n} + f(x,t) \right)  \geq 0.
$$
This proves that $u$ is a viscosity subsolution to the parabolic Monge-Amp\`ere equation. 

To prove that $u$ is a viscosity supersolution we proceed in a similar way, touching $u$ from below with a  convex smooth function $\phi$ and using the asymptotic mean value property with the reverse inequality.
\end{proof}

\begin{remark}
Using \eqref{trace.surface.integral.representation}, 
$$
{\rm trace}(M)
=
\frac{n}{\varepsilon^2}
\dashint_{\partial B_\varepsilon (0)}
\langle My,y\rangle
\,d\mathcal{H}^{n-1}(y),
$$
instead of \eqref{trace.solid.integral.representation},
we can obtain an asymptotic  mean value formula involving means over surfaces of ellipsoids:
$u$ is a
viscosity solution of the parabolic Monge-Amp\`ere equation 
\eqref{monge.ampere.1.s}
if and only if 
\begin{equation}\label{MVP.pabob.1.999.456}
\begin{split}
u(x,t)
=
\mathop{\mathop{\inf}_{\det (A)=1}}_{A\leq \phi(\varepsilon)I}
\bigg\{
\dashint_{t- \varepsilon^2}^t\dashint_{\partial B_{\varepsilon}(0)}
u(x+Ay,s)
\,dy\,ds
\bigg\}
+\frac{1}{2}\, f(x,t)\,\varepsilon^2
+o(\varepsilon^2)
\end{split}
\end{equation}
as $\varepsilon\to0$ for $x\in \Omega$ and $t \in (t_1,t_2)$,
in the viscosity sense. 
\end{remark}

\begin{remark} We can also deal with equations with coefficients such as 
$$
a(x,t) \frac{\partial u}{\partial t} (x,t) =  b(x,t)(\det (D^2 u (x,t))^{1/n} + c(x,t) f(x,t)
$$
(see \cite{DasSav}). In this case we obtain an asymptotic  mean value property of the form 
\[
\begin{split}
u(x,t)
=
\mathop{\mathop{\inf}_{\det (A)=1}}_{A\leq \phi(\varepsilon)I}
\bigg\{
\dashint_{t-  k_1(x,t) \varepsilon^2}^t\dashint_{B_{\varepsilon}(0)}
u(x+Ay,s)
\,dy\,ds
\bigg\}
+\frac{n}{2(n+2)}\, k_2(x,t) f(x,t)\,\varepsilon^2
+o(\varepsilon^2)
\end{split}
\]
as $\varepsilon\to 0$ for $x\in \Omega$ and $t \in (t_1,t_2)$ with an appropriate choice of $k_1(x,t)$ and $k_2(x,t)$. 
\end{remark}

\subsection{Second version} Now we deal with
 \begin{equation} \label{monge.ampere.2.999}
 - \frac{\partial u}{\partial t} (x,t) \cdot \mbox{det} \big(D^2u (x,t)\big) =  f(x,t)
 \end{equation}
and our goal is to prove Theorem \ref{te2.intro}, that is, to show that viscosity solutions to \eqref{monge.ampere.2.999} are characterized by 
the asymptotic  mean value formula 
\begin{equation}\label{MVP.pabob.2.678}
\begin{split}
u(x,t)
=
\mathop{\mathop{\inf}_{\det (A) \times b =1}}_{A\leq \phi(\varepsilon)I,\ b \leq \phi (\varepsilon)}
\bigg\{
\dashint_{t-  \frac{b^2}{n+2} \varepsilon^2}^t\dashint_{B_{\varepsilon}(0)}
u(x+Ay, s)
\,dy\,ds
\bigg\}
-\frac{n+1}{2(n+2)}\,f(x,t)^\frac{1}{n+1}\,\varepsilon^2
+o(\varepsilon^2)
\end{split}
\end{equation}
as $\varepsilon\to0$ for $x\in \Omega$
in the viscosity sense.

\begin{proof}[Proof of Theorem \ref{te2.intro}]
Let us first prove the result for classical
solutions. Assume that $u$ is $C^{2,1}$, convex in space, non-increasing in $t$, and  
a solution to 
\begin{equation} \label{monge.ampere.1.s.77.99}
- \frac{\partial u}{\partial t} (x,t)\cdot  \det{\big(D^2u (x,t)\big)} = f(x,t).
 \end{equation}

As in the  proof of Theorem \ref{te1.intro}, we define the parabolic paraboloid
\[
P(y,s)=u(x,t) + \frac{\partial u}{\partial t} (x,t) (s-t)  +
\langle \nabla u(x,t), (y-x) \rangle + \frac12 \langle D^2 u (x,t) (y-x), (y-x) \rangle.
\]
 Since $u\in C^{2,1}$,  the Taylor expansion of $u(y,s)$ yields
\[
u(y,s)-P(y,s)=o(|y-x|^2 + |t-s|) \qquad\textrm{as}\ y \to x, \, s\to t,
\]
which means that for every $\eta >0$, there exists $\delta>0$ such that 
\begin{equation} \label{u-P.o.pequena.2}
P(y,s)-\eta\left(\frac{|y-x|^2}{2}+|t-s|\right)
\leq
u(y,s)
\leq
P(y,s)+\eta\left(\frac{|y-x|^2}{2}+|t-s|\right)
\end{equation}
for every $(y,s)\in B_\delta(x)\times(t-\delta,t+\delta)$.
For convenience, we denote
\[
P_\eta^\pm(y,s)=P(y,s)\pm\eta\left(\frac{|y-x|^2}{2}+|t-s|\right).
\]

We assume first that $D^2u(x,t) >0$ and that $- \displaystyle \frac{\partial u}{\partial t} (x,t)>0$. In this case we have
\[
\begin{split}
\dashint_{t-  \frac{b^2}{n+2} \varepsilon^2}^t&\dashint_{B_{\varepsilon}(0)}
P_\eta^\pm (x+Ay, s)
\,dy\,ds
  - u (x,t) \\
 =& \dashint_{t- \frac{ b^2}{n+2} \varepsilon^2}^t\dashint_{B_{\varepsilon}(0)} \left(\frac{\partial u}{\partial t} (x,t) (s-t) \pm \eta|t-s| \right) \,dy\,ds
\\
&+ \dashint_{t-  \frac{b^2}{n+2}\varepsilon^2}^t\dashint_{B_{\varepsilon}(0)} \left(\langle \nabla u(x,t), Ay \rangle + \frac12 \langle D^2 u (x,t) Ay, Ay \rangle \pm \frac\eta2 |Ay|^2\right)
\,dy\,ds 
\\
=&  \left(-\frac{\partial u}{\partial t} (x,t) \pm \eta\right) \dashint_{t-  \frac{b^2}{n+2}\,\varepsilon^2}^t (t-s)  \,ds
   +\frac12 \dashint_{B_{\varepsilon}(0)}  \big\langle A^t (D^2 u (x,t) \pm\eta I)Ay, y \big\rangle
\,dy  
\\
=& \; \frac{b^2}{2(n+2)}\,\varepsilon^2 \left(-\frac{\partial u}{\partial t} (x,t) \pm \eta\right)  
+\frac12 \dashint_{B_{\varepsilon}(0)}  \big\langle A^t (D^2 u (x,t) \pm\eta I)Ay, y \big\rangle
\,dy.  
\end{split}
\]
By Lemma~\ref{lemma.trace.integral} we have that
$$
\dashint_{B_{\varepsilon}(0)}  \big\langle A^t (D^2 u (x,t) \pm \eta I) Ay, y \big\rangle
\,dy   = 
\frac{\varepsilon^2}{n+2}\,\textrm{trace}\left(A^t\left(D^2u(x,t) \pm \eta I \right)A\right)
$$
and we obtain
\[
\begin{split} 
\mathop{\mathop{\inf}_{\det (A) \times b =1}}_{A\leq \phi(\varepsilon)I,\ b \leq \phi (\varepsilon)}&
\bigg\{
\dashint_{t-  \frac{b^2}{n+2}\,\varepsilon^2}^t\dashint_{B_{\varepsilon}(0)}
P_\eta^\pm (x+Ay, s)
\,dy\,ds
\bigg\} - u (x,t)  \\[10pt]
& =  
\frac{\varepsilon^2}{2(n+2)} 
\mathop{\mathop{\inf}_{\det (A) \times b =1}}_{A\leq \phi(\varepsilon)I,\ b \leq \phi (\varepsilon)} 
\Bigg\{ 
b^2 \left(-\frac{\partial u}{\partial t} (x,t) \pm \eta\right)   
 + \textrm{trace}\left(A^t\left(D^2u(x,t) \pm \eta I \right)A\right)  \Bigg\} .
 \end{split}
 \]

Observe that  $A\leq \phi( \varepsilon)I$ and $|y|\leq \varepsilon$ imply
 $x+Ay\in B_\delta(x)$  for $\varepsilon< \varepsilon_0$ (since $\eps\phi(\eps)\to 0$ as $\eps\to 0$). 
 Similarly, $b \leq \phi (\varepsilon)$ implies $\big(t-  \frac{b^2}{n+2}\,\varepsilon^2,t\big)\subset(t-\delta,t+\delta)$ for $\varepsilon< \varepsilon_0$.
Therefore,  \eqref{u-P.o.pequena.2} implies that
given $\eta>0$ there is $\varepsilon_0$ such that if $\varepsilon<\varepsilon_0$, then
 \begin{equation}\label{C2.case.second.part.88}
P_\eta^-(x+Ay,s)
\leq
u(x+Ay,s)
\leq
P_\eta^+(x+Ay,s)\quad\textrm{for every}\ (y,s)\in B_\varepsilon(0)\times\left(t-  \frac{b^2}{n+2}\,\varepsilon^2,t\right).
\end{equation}
Consequently,  we obtain
\begin{equation}\label{C2.case.final.loop.first.part.77}
\begin{split}
\frac{\varepsilon^2}{2(n+2)} &
\mathop{\mathop{\inf}_{\det (A) \times b =1}}_{A\leq \phi(\varepsilon)I,\ b \leq \phi (\varepsilon)} 
\Bigg\{ 
b^2 \left(-\frac{\partial u}{\partial t} (x,t) - \eta\right)   
 + \textrm{trace}\left(A^t\left(D^2u(x,t) - \eta I \right)A\right)  \Bigg\} 
\\
&\leq
\mathop{\mathop{\inf}_{\det (A) \times b =1}}_{A\leq \phi(\varepsilon)I,\ b \leq \phi (\varepsilon)}
\dashint_{t- \frac{b^2}{n+2}\varepsilon^2}^t \dashint_{B_{\varepsilon}(0)}
\left(
u(x+Ay,s)-u(x,t)
\right)
\,dy\,ds
\\
&\leq
\frac{\varepsilon^2}{2(n+2)} 
\mathop{\mathop{\inf}_{\det (A) \times b =1}}_{A\leq \phi(\varepsilon)I,\ b \leq \phi (\varepsilon)} 
\Bigg\{ 
b^2 \left(-\frac{\partial u}{\partial t} (x,t) + \eta\right)   
 + \textrm{trace}\left(A^t\left(D^2u(x,t) + \eta I \right)A\right)  \Bigg\} .
\end{split}
\end{equation}
Now, we   notice that 
\[
\begin{split}
b^2 \left(-\frac{\partial u}{\partial t} (x,t) \pm \eta\right)  
& + \textrm{trace}\left(A^t\left(D^2u(x,t)\pm\eta I \right)A\right)   
\\
&=
\textrm{trace}\left( C_{b,A}^t  \left[ \begin{array}{cc}
\displaystyle -\frac{\partial u}{\partial t} (x,t) \pm \eta   & 0  \\
0 &   D^2u(x,t) \pm\eta I
\end{array} \right] C_{b,A} \right),
\end{split}
\]
where the matrix $C_{b,A} \in \mathbb{S}^{n+1}$ is given by
$$
C_{b,A} = \left[ \begin{array}{cc}
b & 0  \\
0 &   A 
\end{array} \right].
$$
Then, from Lemma  \ref{lemma.convergence.eta}
(notice that  we assumed that $D^2u (x,t)$ is strictly positive and $- \displaystyle \frac{\partial u}{\partial t} (x,t)$ is also strictly positive, 
in this case we have $f(x,t) \neq 0$) we get that
\[
\begin{split}
\lim_{\eta\to 0}&\left\{
\mathop{\mathop{\inf}_{ \det (A)\times b=1}}_{A\leq \phi(\varepsilon)I,\ b \leq \phi (\varepsilon)}
\textrm{trace}\left( C_{b,A}^t  \left[ \begin{array}{cc}
\displaystyle - \frac{\partial u}{\partial t} (x,t)\pm\eta  & 0  \\
0 &  \displaystyle  D^2u(x,t) \pm\eta I
\end{array} \right] C_{b,A} \right) 
\right\}
\\
& = \lim_{\eta\to 0}\left\{ \inf_{ \det (A)\times b=1} 
\textrm{trace}\left( C_{b,A}^t  \left[ \begin{array}{cc}
\displaystyle - \frac{\partial u}{\partial t} (x,t) \pm\eta & 0  \\
0 & \displaystyle   D^2u(x,t) \pm\eta I 
\end{array} \right] C_{b,A} \right)  \right\} \\
 &= 
(n+1) \left( - \frac{\partial u}{\partial t} (x,t)\cdot \det\big(D^2u(x,t)  \big) \right)^{\frac{1}{n+1}} 
\end{split}
\]
for $\varepsilon$ small enough, and the result follows.

Now, if $$- \frac{\partial u}{\partial t} (x,t)\cdot\det(D^2u(x,t))=0$$ (i.e., $f(x,t)=0$), the parabolic convexity of $u$ and a minor modification of the previous arguments
yield 
\[
\mathop{\mathop{\inf}_{\det (A) \times b =1}}_{A\leq \phi(\varepsilon)I,\ b \leq \phi (\varepsilon)}
\bigg\{
\dashint_{t-  \frac{b^2}{n+2} \varepsilon^2}^t\dashint_{B_{\varepsilon}(0)}
u(x+Ay, b s)
\,dy\,ds
\bigg\} - u(x,t) 
\geq 
 o(\varepsilon^2).
\]
Since the upper bound is still at our disposal, the result follows.

Finally, for the viscosity version of our asymptotic mean value formulas, we proceed as in the proof of Theorem \ref{te1.intro}, by
touching $u$ from above or below with a parabolically convex, smooth test function $\phi$
and using our previous computations that showed that a smooth parabolically convex
function verifies an inequality like 
 \begin{equation} \label{monge.ampere.2.88}
 - \frac{\partial \phi}{\partial t} (x,t) \cdot \mbox{det} (D^2\phi (x,t))  \geq  f(x,t).
 \end{equation}
if and only if  
\begin{equation}\label{MVP.pabob.2.88}
\begin{split}
\phi(x,t)
\leq 
\mathop{\mathop{\inf}_{\det (A) \times b =1}}_{A\leq \phi(\varepsilon)I,\ b \leq \phi (\varepsilon)}
\bigg\{
\dashint_{t- \frac{b^2}{n+2} \varepsilon^2}^t\dashint_{B_{\varepsilon}(0)}
\phi (x+Ay, s)
\,dy\,ds
\bigg\}
-\frac{n+1}{2(n+2)}\,(f(x,t))^{\frac{1}{n+1}}\,\varepsilon^2
+o(\varepsilon^2)
\end{split}
\end{equation}
proving our characterization of viscosity solutions to \eqref{monge.ampere.2.999}. 
\end{proof}

\begin{remark}
With the same ideas, 
from the formula
$$
{\rm trace}(M)
=
\frac{n}{\varepsilon^2}
\dashint_{\partial B_\varepsilon (0)}
\langle My,y\rangle
\,d\mathcal{H}^{n-1}(y),
$$
we can obtain an asymptotic mean value formula involving means over surfaces of ellipsoids:
$u$ is a
viscosity solution to  the parabolic Monge-Amp\`ere equation 
\begin{equation} \label{monge.ampere.1.s.77.99.567}
- \frac{\partial u}{\partial t} (x,t)\cdot  \det{\big(D^2u (x,t)\big)} = f(x,t).
 \end{equation}
if and only if 
\begin{equation}\label{MVP.pabob.1.999.456.88}
\begin{split}
u(x,t)
=
\mathop{\mathop{\inf}_{\det (A) \times b =1}}_{A\leq \phi(\varepsilon)I,\ b \leq \phi (\varepsilon)}
\bigg\{
\dashint_{t- \frac{b^2}{n} \varepsilon^2}^t
\dashint_{\partial B_{\varepsilon}(0)}
u(x+Ay,s)
\,dy\,ds
\bigg\}
+\frac{n+1}{2 n}\, (f(x,t))^{\frac{1}{n+1}}\,\varepsilon^2
+o(\varepsilon^2)
\end{split}
\end{equation}
as $\varepsilon\to0$
in the viscosity sense. 
\end{remark}

\section{Infimum and inf-sup operators}\label{sec.infimums}

\subsection{Infimum operators}
In this section we deal with equations of the form
\begin{equation}\label{operator.bounded.A.88}
F\Big(x,t,\frac{\partial u}{\partial t}, D^2u \Big)=\inf_{(A,b)\in\A_{x,t}} \Big\{ \tr(A^tD^2u(x,t)A)- b \frac{\partial u}{\partial t} (x,t) \Big\} =0 .
\end{equation}
Here $\A_{x,t}\subset \mathbb{S}_+^n(\mathbb{R})\times \mathbb{R}^+$  
are bounded subsets for each $x\in\R$ and $t\in [t_1,t_2]$.

Observe that we are assuming that the sets $\A_{x,t}$ are bounded. 
This has to be contrasted to the previous case, the Monge-Amp\`ere case, where
the set of relevant matrices $\mathcal{A}_x=\{A\in \mathbb{S}_+^n(\mathbb{R}): \det(A)=~1\}$ is unbounded.

The fact that we are taking a bounded set of coefficients in \eqref{operator.bounded.A.88} is equivalent to
assuming that $F$ is well defined and finite.

\begin{lemma}
\label{Axbounded} The operator
$$
F(x,t,z,M)= \inf_{(A,b)\in\A_{x,t}} \Big\{ \tr(A^t M A)- b z \Big\} 
$$
is finite for every $M\in  \mathbb{S}^n(\R)$ and every $z \in \mathbb{R}$ if and only if
$\A_{x,t}$ is bounded.
\end{lemma}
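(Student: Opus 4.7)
The statement is an \emph{if and only if}, and each direction calls for a short, largely computational argument. I will prove the easy direction first and then the contrapositive of the harder direction.

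For the forward implication, suppose $\mathcal{A}_{x,t}$ is bounded, so there exists $C>0$ with $\|A\|\leq C$ and $0<b\leq C$ for every $(A,b)\in\mathcal{A}_{x,t}$. Then for any fixed $M\in\mathbb{S}^n(\mathbb{R})$ and any $z\in\mathbb{R}$, submultiplicativity of the matrix norm gives $|\tr(A^tMA)|\leq n\|A\|^2\|M\|\leq nC^2\|M\|$ and $|bz|\leq C|z|$, so the expression $\tr(A^tMA)-bz$ stays inside a fixed compact interval as $(A,b)$ ranges over $\mathcal{A}_{x,t}$. In particular the infimum is finite.

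For the converse, I will argue the contrapositive: if $\mathcal{A}_{x,t}$ is unbounded, then there exist $M,z$ with $F(x,t,z,M)=-\infty$. Unboundedness gives a sequence $(A_k,b_k)\in\mathcal{A}_{x,t}$ such that $\|A_k\|+b_k\to\infty$, so at least one of the two sequences $\{\|A_k\|\}$ or $\{b_k\}$ is unbounded along a subsequence. If $\{b_k\}$ is unbounded, I choose $M=0$ and $z=1$, obtaining
\[
\tr(A_k^t\cdot 0\cdot A_k)-b_k\cdot 1=-b_k\longrightarrow -\infty,
\]
so the infimum is $-\infty$. If instead $\{\|A_k\|\}$ is unbounded, I choose $M=-I$ and $z=0$; since $A_k\in\mathbb{S}_+^n(\mathbb{R})$, in finite dimension all norms are equivalent and $\|A_k\|_F^2=\tr(A_k^tA_k)\to\infty$, hence
\[
\tr(A_k^t(-I)A_k)-b_k\cdot 0=-\|A_k\|_F^2\longrightarrow -\infty,
\]
giving again $F(x,t,0,-I)=-\infty$. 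In either case we contradict the finiteness of $F$, completing the proof.

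There is no real obstacle here: the only mild point is recognizing that one must produce a single $(M,z)$ that witnesses $F=-\infty$, and the positive semidefiniteness of the $A$'s is precisely what makes $M=-I$ a universal witness in the matrix direction (since then $\tr(A^tMA)=-\|A\|_F^2$ is a genuine norm rather than an indefinite quadratic form that could accidentally vanish on the unbounded subsequence).
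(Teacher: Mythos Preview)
Your proof is correct, and it is actually cleaner than the paper's. The paper omits the easy direction entirely and, for the contrapositive, handles the case of unbounded matrices by passing to a subsequence of top eigenvectors $v_k\to v$, then testing with the rank-one matrix $M=-vv^t$; this requires tracking how well $v_k$ aligns with its limit $v$ to conclude that $\tr(A_k^tMA_k)\to-\infty$. Your choice $M=-I$ sidesteps all of that: because the admissible $A$ lie in $\mathbb{S}_+^n(\mathbb{R})$, the expression $\tr(A^t(-I)A)=-\|A\|_F^2$ is automatically a negative definite quadratic form in $A$, so no subsequence or alignment argument is needed. The rank-one construction in the paper would still make sense for matrices $A$ that are not positive semidefinite (via a singular-value argument), but in the setting of the lemma your choice of $M=-I$ and $z=1$ is the simplest universal witness.
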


\begin{proof} Suppose that $\A_{x,t}$ is not bounded. First, assume that
there exists a sequence of matrices $A_k\in \A_x$ such that their largest eigenvalue $\lambda_k$ diverges.
Let $v_k$ be the corresponding unitary eigenvectors.
Since $v_k$ are unitary vectors, we can extract a subsequence (denoted equal) that has a limit, that is, $v_k\to v$.
Let $M$ be the symmetric matrix with eigenvector $v$ of eigenvalue $-1$ and eigenvalue 0 with multiplicity $n-1$.
Then
\[
\tr (A_k^tMA_k)\leq -\lambda_k^2 \|\textrm{proj}_{v}(v_k)\|+\lambda_k^2 \|\textrm{proj}_{v^\bot}(v_k)\|.
\]
Which is a contradiction since the RHS goes to $-\infty$.

Next, if the set $\A_{x,t}$ contains an unbounded set of $b$. If $b_k \to + \infty$, we just take $z=-1$ and if $b_k \to -\infty$ then take $z=1$, to obtain
the contradiction.  
\end{proof}

Now we are ready to prove Theorem \ref{th.sol.viscosa.intro}.

\begin{proof}[Proof of Theorem \ref{th.sol.viscosa.intro}]
As in the previous proofs, let us use the Taylor expansion of $u(y,s)$,
$$
\begin{array}{l}
\displaystyle
u(y,s) = u(x,t) + \frac{\partial u}{\partial t} (x,t) (s-t)  +
\langle \nabla u(x,t), (y-x) \rangle  \\[10pt]
\qquad \qquad  \displaystyle + \frac12 \langle D^2 u (x,t) (y-x), (y-x) \rangle + o(|t-s|+|x-y|^2).
\end{array}
$$

Notice that now, since the set of involved coefficients (matrices) are bounded then the error terms
in the Taylor expansion
are uniform, therefore we can avoid the use of paraboloids here. 

As before, we start with the proof of the case in which the solution $u$ is a classical $C^{2,1}$ solution.
We have
$$
\begin{array}{l}
\displaystyle 
\dashint_{t-  \frac{b}{n+2} \varepsilon^2}^t\dashint_{B_{\varepsilon}(0)}
u (x+Ay, s)
\,dy\,ds
 - u (x,t) \\[10pt]
\displaystyle = \dashint_{t- \frac{b}{n+2} \varepsilon^2}^t\dashint_{B_{\varepsilon}(0)} \frac{\partial u}{\partial t} (x,t) (s-t)  \,dy\,ds
\\[12pt]
\qquad \displaystyle + \dashint_{t- \frac{b}{n+2} \varepsilon^2}^t\dashint_{B_{\varepsilon}(0)} \langle \nabla u(x,t), Ay \rangle + 
\frac12 \langle D^2 u (x,t) Ay, Ay \rangle \,dy\,ds + o(\varepsilon^2) 
\\[12pt]
\displaystyle = \frac{\partial u}{\partial t} (x,t) \dashint_{t- \frac{b}{n+2}\varepsilon^2}^t (s-t)  \,ds
  + \dashint_{B_{\varepsilon}(0)} \frac12 \langle A^t (D^2 u (x,t) ) Ay, y \rangle
\,dy + o(\varepsilon^2) 
\\[12pt]
\displaystyle = -   \frac{b}{2(n+2)} \varepsilon^2 \, \frac{\partial u}{\partial t} (x,t) 
 +\dashint_{B_{\varepsilon}(0)} \frac12 \langle A^t (D^2 u (x,t) ) Ay, y \rangle
\,dy  + o(\varepsilon^2) .
\end{array}
$$

Now, we use one more time Lemma \ref{lemma.trace.integral} to obtain
$$
\dashint_{B_{\varepsilon}(0)} \frac12 \langle A^t (D^2 u (x,t) ) Ay, y \rangle
\,dy = \frac{\varepsilon^{2}}{n+2}\,
{\rm trace}(A^t (D^2 u (x,t) ) A).
$$

Therefore, we have 
\[
\begin{split}
\dashint_{t-  \frac{b}{n+2} \varepsilon^2}^t&\dashint_{B_{\varepsilon}(0)}
u (x+Ay, s)
\,dy\,ds
 - u (x,t) \\
& = \frac{\varepsilon^2}{2(n+2)} \left ( - b  \frac{\partial u}{\partial t} (x,t) 
 + 
{\rm trace}(A^t (D^2 u (x,t) ) A)  \right) + o(\varepsilon^2) .
\end{split}
\]
Hence, we conclude that
$$
\begin{array}{l}
\displaystyle
\inf_{(A,b)\in\A_{x,t}}
\dashint_{t- \frac{b}{n+2} \varepsilon^2}^t \dashint_{B_{\varepsilon}(0)}
u(x+Ay,s)
\,dy\,ds
-
u(x,t) 
\\[12pt]
\displaystyle =
 \frac{\varepsilon^2}{2(n+2)} \inf_{(A,b)\in\A_{x,t}} \left ( - b  \frac{\partial u}{\partial t} (x,t) 
 + 
{\rm trace}(A^t (D^2 u (x,t) ) A)  \right) + o(\varepsilon^2) 
\\[12pt]
\displaystyle 
=  \frac{\varepsilon^2}{2(n+2)} F\Big(x,t,\frac{\partial u}{\partial t}, D^2u \Big) +
o(\varepsilon^2) \\[12pt]
= o(\varepsilon^2)
\end{array}
$$
proving the asymptotic mean value formula in a pointwise sense for smooth solutions. 

As a consequence of the mean value property for smooth functions we can obtain the 
characterization for viscosity solutions in Theorem \ref{th.sol.viscosa.intro}. The details are left to the reader.
\end{proof}

\begin{remark} \label{rem-sub-super}
The previous proof shows that in fact viscosity supersolutions (subsolutions) to
\begin{equation}\label{operator.bounded.A.88.999}
\inf_{(A,b)\in\A_{x,t}} \Big\{ \tr(A^tD^2u(x,t)A)- b \frac{\partial u}{\partial t} (x,t) \Big\} =0 .
\end{equation}
are characterized by 
$$
\inf_{(A,b)\in\A_{x,t}}
\dashint_{t- \frac{b}{n+2} \varepsilon^2}^t \dashint_{B_{\varepsilon}(0)}
u(x+Ay, s)
\,dy\,ds
-
u(x,t) \leq o(\varepsilon^2) \quad (\geq o(\varepsilon^2)),
$$
as $\varepsilon \to 0$ in the viscosity sense. 
\end{remark}

{\bf Examples.} Let us mention some examples of operators and the corresponding set of matrices $\A$ such that
the previous results apply.
To that end we denote $\lambda_1(M)\leq \lambda_2(M)\leq \cdots \leq \lambda_n(M)$ the eigenvalues of the matrix $M$, 
or simply $\lambda_i$ when the matrix involved is clear from the context.

{\bf Parabolic equations related to Pucci operators.} An important example under these assumptions are the parabolic Pucci operators. 
For given $0<\theta<\Theta$, 
consider the second order, uniformly parabolic operators,
\[
\mathcal{M}^-_{\theta,\Theta} \Big(\frac{\partial u}{\partial t}, D^2u \Big)=
\frac{\partial u}{\partial t} - \left( \theta \sum_{\lambda_i(D^2u)>0}\lambda_i(D^2u)+\Theta\sum_{\lambda_i(D^2u)<0}\lambda_i(D^2u) \right)
\]
and
\[
\mathcal{M}^+_{\theta,\Theta} \Big(\frac{\partial u}{\partial t}, D^2u \Big)= \frac{\partial u}{\partial t} - 
\left( \Theta\sum_{\lambda_i(D^2u)>0}\lambda_i(D^2u)+\theta\sum_{\lambda_i(D^2u)<0}\lambda_i(D^2u) \right).
\]

 In this case, we have as the involved set of matrices 
 \begin{equation}\label{A.theta.Theta}
 \mathcal{A}=\left\{A\in \mathbb{S}_+^n(\mathbb{R}): \sqrt{\theta} \leq \lambda_i(A)\leq \sqrt{\Theta}\right\}
\end{equation}
that is bounded uniformly in $x$. In fact, we can write
$$
\left( \theta \sum_{\lambda_i(M)>0}\lambda_i(M)+\Theta\sum_{\lambda_i(M)<0}\lambda_i(M) \right)=\inf_{A\in\A} \tr(A^tMA)
$$
and
$$
\left( \theta \sum_{\lambda_i(M)>0}\lambda_i(M)+\Theta\sum_{\lambda_i(M)<0}\lambda_i(M) \right)=\inf_{A\in\A} \tr(A^tMA)
=\sup_{A\in\A} \tr(A^tMA).
$$

For these operators we have: $u$ is a viscosity solution to
$$
\frac{\partial u}{\partial t} (x,t)- \left( \theta \sum_{\lambda_i(D^2u (x,t))>0}\lambda_i(D^2u (x,t))+\Theta\sum_{\lambda_i(D^2u(x,t))<0}
\lambda_i(D^2u(x,t)) \right)
=0
$$
if and only if 
$$
u(x,t) = \inf_{\sqrt{\theta} \leq \lambda_i(A)\leq \sqrt{\Theta}}  
\dashint_{t- \frac{\varepsilon^2}{n+2} }^t \dashint_{B_{\varepsilon}(0)}
u(x+Ay, s)
\,dy\,ds
 + o(\varepsilon^2)
$$
as $\varepsilon \to 0$ in the viscosity sense.

{\bf The evolution problem associated with the convex envelope, $\lambda_1(D^2u)$.}
Notice, however, that the operators that satisfy \eqref{operator.bounded.A} do not need to be uniformly elliptic. For example, the operator 
\[
\frac{\partial u}{\partial t} - \lambda_1(D^2u),
\]
 studied in \cite{BER} in connection with the convex envelope of a function, corresponds to the set of matrices 
 $$\mathcal{A}=\Big\{A\in \mathbb{S}_+^n(\mathbb{R}): \lambda_1=\cdots=\lambda_{n-1}=0 \text{ and } \lambda_n=1\Big\}.$$

In this case the asymptotic mean value formula reads as 
$$
u(x,t) = \inf_{|v|=1}
\dashint_{t- \frac{\varepsilon^2}{n+2} }^t \dashint_{B_{\varepsilon}(0)}
u\big(x+ \langle y , v \rangle v, s\big)
\,dy\,ds
 + o(\varepsilon^2).
$$

\subsection{Sup-Inf operators}\label{sec.sup-inf}

We also consider a special type of Isaacs operators where for every $x$ the supremum (or the infimum) is taken over a subset $\mathbb{A}_{x,t}$ of the parts of $S_+^n(\mathbb{R})\times\mathbb{R}^+$. More precisely, let $\mathbb{A}_{x,t} \subset \mathcal P (S_+^n(\mathbb{R})\times\mathbb{R}^+)$ be a subset for each $x\in\R^n$ and $t>0$  such that 
\[
\bigcup\mathbb{A}_{x,t}
=
\Big\{
(A,b)\in \mathbb{S}_+^n(\mathbb{R})\times\mathbb{R}^+\ : \
(A,b)\in \mathcal{A}\ \textrm{for some}\ \mathcal{A}\in\mathbb{A}_{x,t}
\Big\}
\]
is bounded. \normalcolor
 
 We consider the differential operator $F:\Omega \times \R \times \R \times S^n(\R)\to \R$ given by
\begin{equation}
F\Big(x, t, \frac{\partial u}{\partial t}, D^2u \Big) =\sup_{\mathcal{A}\in\mathbb{A}_{x,t}} \inf_{(A,b)\in \mathcal{A}} 
\Big\{ \textrm{trace}(A^tD^2u(x,t)A)
- b \frac{\partial u}{\partial t} (x,t) \Big\}.
\end{equation}

Since $\cup\mathbb{A}_{x,t}$ is bounded, Theorem \ref{th.sol.viscosa.intro.22} follows as before,
and we have the asymptotic mean value characterization of  viscosity solutions of
\begin{equation}
F\Big(x, t, \frac{\partial u}{\partial t}, D^2u \Big)  =0, 
\end{equation} 
as those continuous functions that satisfy
\begin{equation}
\sup_{\mathcal{A}\in\mathbb{A}_{x,t}} \inf_{(A,b)\in \mathcal{A}}
\dashint_{t- \frac{b}{n+2} \varepsilon^2}^t \dashint_{B_{\varepsilon}(0)}
u(x+Ay, s)
\,dy\, ds
-
u(x,t)
=
o(\varepsilon^2),
\end{equation}
as $\eps\to0$ in the sense of viscosity.

We leave the
details to the reader.

{\bf Examples.} As examples of operators such that
the previous results apply we mention the following:

{\bf Evolution problems associated with eigenvalues of the Hessian.} 
This allows us to prove asymptotic mean value formulas for degenerate parabolic operators such as 
$$
\frac{\partial u}{\partial t} = \lambda_k (D^2u)
$$
that where studied in \cite{BER}. Here $\lambda_k (D^2u)$ stands for
the $k$-th smallest eigenvalue of the Hessian, given by the  Courant--Fischer min-max principle
\begin{equation} \label{Courant-Fisher}
\lambda_{k}\big(D^2u(x)\big)= \min_{\textrm{dim}(V) = n-k+1} \ 
\mathop{\mathop{\min}_{v \in V}}_{|v|=1}   \Big\{  \langle D^2u(x)v,v\rangle  \Big\}.
\end{equation}
For example, for the operator $\lambda_2(D^2u)$ we have the set of sets of matrices
\[
\A=\Big\{\{A\in {\mathbb{S}}_+^n(\mathbb{R}): \lambda_1=\cdots=\lambda_{n-1}=0, \lambda_n=1 \text{ and } v_n\in V\}: 
V \text{ a subspace of dimension } n-1 \Big\}.
\]

In this case the asymptotic mean value formula is given by 
$$
u(x,t) = \sup_{\textrm{dim}(V)= n-k+1}  \
\mathop{\mathop{\inf}_{v \in V}}_{|v|=1} 
\dashint_{t- \frac{\varepsilon^2}{n+2} }^t \dashint_{B_{\varepsilon}(0)}
u\big(x+ \langle y , v \rangle v, s\big)
\,dy\,ds
 + o(\varepsilon^2).
$$

For example, for the equation
$$
\frac{\partial u}{\partial t} (x,t) - \lambda_2(D^2u (x,t)) = 0,
$$
we can write the following asymptotic mean value formula
$$
u(x,t) = 
\sup_{|w|=1}
\mathop{\mathop{\inf}_{\langle v,w \rangle =0}}_{|v|=1} 
\dashint_{t- \frac{\varepsilon^2}{n+2}}^t \dashint_{B_{\varepsilon}(0)}
u\big(x+ \langle y , v \rangle v, s\big)
\,dy\,ds
 + o(\varepsilon^2).
$$

\section{A probabilistic interpretation.} \label{sec-DPP}

The asymptotic mean value formulas that we obtained can be interpreted in terms of  Dynamic Programming Principles for two-player zero-sum games. 

First, we describe a closely related random walk.
For a small $\eps>0$ and a fixed matrix $A$ consider the following random walk in a bounded domain $\Omega_T=\Omega \times (0,T)$. 
From $(x,t)$ the next spacial position is given by $x+Ay$ with $y\in B_{\varepsilon}$ chosen with uniform distribution in the ball and the new time is $t -  \frac{b}{2(n+2)} \varepsilon^2$.
The process continues until the spacial position leaves the $\Omega$ or when the time becomes nonpositive. 
We call $\tau$ the stopping time given by the number of plays until the position of the game leaves $\Omega_T$ and call $(x_\tau,t_\tau)$ the last position of the process. 
We have $(x_\tau,t_\tau)\not\in \Omega_T$, even more, $(x_\tau,t_\tau)\in \Omega_T^c:= \R^n\times (-\infty,T) \setminus \Omega_T$.

We fix a final payoff functions $g:\Omega_T^c\to \R$ and we define
$$
v^\varepsilon (x,t) = \mathbb{E}^{x,t} [g(x_\tau,t_\tau)].
$$
Then, it follows that $v^\varepsilon$ verifies 
$$
v^\varepsilon (x,t)
=
\dashint_{B_{\varepsilon}(0)}
v^\varepsilon \Big(x+Ay, t - \frac{b}{2(n+2)} \varepsilon^2 \Big)
\,dy
$$
for $x\in \Omega$, $t>0$.
That is, the expected final payoff starting at $(x,t)$ is equal to the average of the expectation over all the possible next positions.

Next we describe the two-player zero-sum game.
We are given $\mathbb{A}_{x,t}\subset \mathcal P (S_+^n(\mathbb{R})\times (0,+\infty))$ for each $(x,t)\in \Omega_T$.
The game starts at $(x_0,t_0) \in \Omega_T$, 
the first player, Player~I, chooses a set of matrices and scalars
$\mathcal{A}\in\mathbb{A}_{x,t}$ (she chooses a casino in probabilistic terms) and next the second player, Player~II, chooses a 
a matrix and a scalar $(A,b) \in \mathcal{A}$
(she chooses a game to play in the casino chosen by Player~I).
The next position of the game is given by the previously described random walk, that is, $$x_1 = x_0 + Ay$$ with $y\in B_{\varepsilon}$ being 
chosen according to the uniform distribution
in $B_{\varepsilon}$ and 
$$
t_1=t_0 - b \frac{1}{2(n+2)} \varepsilon^2.
$$
The game continues from $x_1$ accordingly to the same rules.
The game finishes at the first time at which the position $(x_\tau,t_\tau)$ leaves $\Omega_T$.
At this point, Player~I gets $g(x_\tau, t_\tau)$ and Player~II gets $-g(x_\tau, t_\tau)$ (one can think that Player~II pays to Player~I the amount given by 
the final payoff).

When the two 
players decide what to play at each turn (they choose theirs strategies), we can compute expected payoff (the expected earnings for Player~I)
playing with strategies $S_I$ and $S_{II}$ starting at $(x,t) \in \Omega_T$ as
$$
\mathbb{E}_{S_I,S_{II}}^{x,t} [g(x_\tau, t_\tau)].
$$

Then, the extreme values for this game are given by 
$$
u^I (x,t) =  \inf_{S_{II}}\sup_{S_I} \mathbb{E}_{S_I,S_{II}}^{x,t} [g(x_\tau, t_\tau)]
$$
and
$$
u^{II} (x,t) =  \sup_{S_I} \inf_{S_{II}} \mathbb{E}_{S_I,S_{II}}^{x,t} [g(x_\tau, t_\tau)].
$$
We are taking $\sup$ and $\inf$ over the strategies for Player~I and Player~II respectively since Player~I wants to maximize the value of $g(x_\tau, t_\tau)$ while Player~II is trying to minimize it.
When these two extreme values coincide, we say that the game has a value given by
$$
u^\varepsilon (x,t) := u^{II} (x,t) = u^{I} (x,t).
$$
It turns out that $u^\varepsilon (x,t)$ is a solution to the Dynamic Programming Principle associated to this game
that reads as
$$
u^\varepsilon (x,t) = \sup_{\mathcal{A}\in\mathbb{A}_{x,t}} \inf_{A\in \mathcal{A}}
\dashint_{B_{\varepsilon}(0)}
u^\varepsilon\left(x+Ay, t - b \frac{1}{2(n+2)} \varepsilon^2\right)
\,dy.
$$
Notice that this formula is just our asymptotic mean value formula \eqref{pepe} without the error term. 

When the next time position is also chosen with uniform probability in the interval $(t - b \frac{1}{n+1} \varepsilon^2,t)$
we obtain 
$$
u^\varepsilon (x,t)
=
\sup_{\mathcal{A}\in\mathbb{A}_x} \inf_{A\in \mathcal{A}} \dashint_{t - \frac{b}{n+2} \varepsilon^2}^t
\dashint_{B_{\varepsilon}(0)}
u^\varepsilon (x+Ay, s)
\,dy \, ds,
$$
where the reader can recognize the left-hand side of \eqref{sup.inf.MVP.formula} without the error term. 

For the mean value formulas associated with Monge-Amp\`ere operators 
we just consider a game with only one player (a controller) that chooses 
at each point the parameters involved in the random walk (the controller chooses the matrix $A$
involved in the random walk). We add a running payoff (at each point the player pays $\frac{n}{2(n+2)} f(x,t) \varepsilon^2$).
In this case the value of the game is given by
$$
u^\varepsilon  (x,t) =  \inf_{S_I} \mathbb{E}_{S_I}^{x,t} \Big[g(x_\tau, t_\tau) + \sum_{j=0}^{\tau-1} \frac{n}{2(n+2)} f(x_j,t_j) \varepsilon^2 \Big],
$$
and the associated Dynamic Programming Principle reads as
$$
u^\varepsilon (x,t)
=
\mathop{\mathop{\inf}_{\det (A)=1}}_{A\leq \phi(\varepsilon)I}
\bigg\{
\dashint_{t- \frac{n}{n+2} \varepsilon^2}^t\dashint_{B_{\varepsilon}(0)}
u^\varepsilon (x+Ay,s)
\,dy\,ds
\bigg\}
+ \frac{n}{2(n+2)}\, f(x,t)\,\varepsilon^2.
$$

When the controller also chooses  
the coefficient $b$ involved in the time step and the running payoff is given by
$- \frac{n+1}{2(n+2)}\,(f(x,t))^{\frac{1}{n+1}} \varepsilon^2$, we get 
$$
u^\varepsilon(x,t)
=
\mathop{\mathop{\inf}_{\det (A) \times b =1}}_{A\leq \phi(\varepsilon)I,\ b \leq \phi (\varepsilon)}
\bigg\{
\dashint_{t-  \frac{b^2}{n+2} \varepsilon^2}^t\dashint_{B_{\varepsilon}(0)}
u^\varepsilon (x+Ay, s)
\,dy\,ds
\bigg\}
-\frac{n+1}{2(n+2)}\,(f(x,t))^{\frac{1}{n+1}}\,\varepsilon^2.
$$
For references on this program involving games and PDEs we refer to \cite{BR,[Manfredi2012], PSSW}, the book \cite{[Blanc and Rossi 2019]}, and references therein.

\medskip


\bibliographystyle{plain}

\end{document}